 \newtheorem{theorem}{Theorem}[section]
 \newtheorem*{theorem*}{Theorem}
 \newtheorem*{lemma*}{Lemma}
 \newtheorem{proposition}[theorem]{Proposition}
 \newtheorem{fact*}{Fact}
 \newtheorem{lemma}[theorem]{Lemma}
 \newtheorem{corollary}[theorem]{Corollary}
\theoremstyle{definition}
 \newtheorem{remark}[theorem]{Remark}
 \newtheorem*{remark*}{Remark}
\numberwithin{equation}{section}
\renewcommand{\labelenumi}{\theenumi}
\newcommand{\vect}[1]{\boldsymbol{#1}}
\renewcommand{\vec}[1]{\boldsymbol{#1}}
\newcommand{\R}{\boldsymbol{R}}
\newcommand{\Z}{\boldsymbol{Z}}
\newcommand{\rank}{\operatorname{rank}}
\newcommand{\image}{\operatorname{Im}}
\renewcommand{\phi}{\varphi}
\newcommand{\sgn}{\operatorname{sgn}}
\newcommand{\inner}[2]{\left\langle{#1},{#2}\right\rangle}
\newcommand{\ep}{\varepsilon}
\newcommand{\zv}{\vect{0}}
\newcommand{\e}{\vect{e}}
\newcommand{\x}{\vect{x}}
\newcommand{\y}{\vect{y}}
\newcommand{\z}{\vect{z}}
\newcommand{\dd}{\vect{d}}
\newcommand{\pmt}[1]{{\begin{pmatrix} #1  \end{pmatrix}}}
\newcommand{\mycomment}[1]{}
\newcommand{\firff}{\operatorname{\mathit{I}}}
\newcommand{\secff}{\operatorname{\mathit{I\!I}}}
\begin{document}
\begin{center}
{\large {\bf Geometric invariants of cuspidal edges}}
\\[2mm]
{\today}
\\[2mm]
\renewcommand{\thefootnote}{\fnsymbol{footnote}}
Luciana F. Martins and
Kentaro Saji\\
\footnote[0]{
2010 Mathematics Subject classification.
Primary 57R45; Secondary 53A05, 53A55.}
\footnote[0]{Keywords and Phrases. Cuspidal edge,
Curvature, Wave fronts}
\footnote[0]{
Dedicated to Professor Mar\'ia del Carmen Romero-Fuster
  on 
  the occasion of her sixtieth birthday}

\begin{quote}
{\small
We give a normal form of the cuspidal edge
which uses only diffeomorphisms on the source
and isometries on the target.
Using this normal form, we study differential
geometric invariants of
cuspidal edges which determine them up to order three.
We also
clarify relations between these invariants.}
\end{quote}
\end{center}
\section{Introduction}
A generic classification of singularities of wave fronts was given
by Arnol'd and Zakalyukin. They showed that the generic
singularities of wave fronts in $\R^3$ are cuspidal edges and
swallowtails (see \cite{AGV}, for example). Recently, there are
numerous studies of wave fronts from the viewpoint of differential
geometry, for example \cite{izudual,mu,nishimura,front,suyojm}.
Cuspidal edges are fundamental singularities of wave fronts in
$\R^3$. The singular curvature and the limiting normal curvature for
cuspidal edges are defined in  \cite{front} by a limit of geodesic
curvatures and a limit of normal curvatures, respectively. On the
other hand, the umbilic curvature is defined in \cite{mn} for
surfaces in Euclidean 3-space with corank 1 singularities, by using
the first and second fundamental forms.  So, the umbilic curvature
is defined for cuspidal edges. It is shown in \cite{mn} that if the
umbilic curvature $\kappa_u$ is non-zero at a singular point, then
there exists a unique sphere having contact not of type $A_n$ (for
example, $D_4$, $E_6$ etc) with the surface in that point: the
sphere with center in the normal plane of the surface at the point,
with radius equal to $1/\kappa_u$ and in a well defined direction of
the normal plane.

Therefore, the singular, the limiting normal
and the umbilic curvatures are
invariants  defined by using
fundamental tools of differential geometry of
surfaces and singularity theory, and they are
fundamental invariants of cuspidal
edges. Needless to say, the curvature and torsion
of a cuspidal edge
locus as a space curve in $\R^3$ are also
fundamental invariants.

In this paper we clarify the relations amongst of these invariants
and also make a list of invariants which determine cuspidal edges up
to order three. We show that, in the case of cuspidal edges, the
umbilic curvature $\kappa_u$ coincides with the absolute value of
limiting normal
curvature $\kappa_n$ (Theorem \ref{thm:ku}). In this sense, the
umbilic curvature is a generalization of the normal curvature for
surfaces with corank 1 singularities. It should be remarked that the
umbilic curvature does not require a well-defined unit normal
vector, and it is meaningful as a geometric invariant of surfaces
with corank 1 singularities in general. We show that the singular
curvature $\kappa_s$ and the limiting normal curvature $\kappa_n$ at
a singular point of a surface $M$ in $\R^3$ with singularities
consisting of cuspidal edges are equivalent to the principal
curvatures of a regular surface in the following sense. When $M$ is
a regular surface in $\R^3$ given in the Monge form, that is, by the
equation $z=f(x,y)$ for some smooth function $f$, and its
first derivatives with respect to $x$ and $y$ vanishing at $(0,0)$
(or, equivalently, its tangent plane at the origin is given by
$z=0$), then, taking the $x$ and $y$ axes to be in principal
directions at the origin, the surface $M$ assumes the local  form
$$f(u,v) = \frac{1}{2}(\alpha_1 u^2 + \alpha_2 v^2) + h.o.t.,$$
where $\alpha_1, \alpha_2$ are the principal curvatures at the
origin and $h.o.t.$ represents terms whose degrees are greater than
two.

Now, if $M$ is  a surface in $\R^3$ with singularities consisting of
cuspidal edges, we show that  $M$ can be parametrized by an equation
of Monge type (just using changes of coordinates in the source and
isometries in the target, which do not change the geometry of the
surface), called here of ``normal form'', given by
$$
f(u,v) = \frac{1}{2}(2 u,\, \kappa_s u^2 + v^2 +h.o.t., \, \kappa_n
u^2 + h.o.t. )
$$
(see Section \ref{sec:west:type} and Theorem \ref{thm:rel1}).
Therefore, $\kappa_s$ and $\kappa_n$ can be considered as the
principal curvatures of $M$ at singular points. But we can say even
more about these two invariants. While for a  regular curve in a
regular surface it holds that $\kappa^2 = \kappa_n^2 +
\kappa_g^2,$ where $\kappa_n$ and $\kappa_g$ are the normal and
geodesic curvatures of the curve, respectively, and $\kappa$ is the
curvature of the curve as a space curve,  for the singular curve
consists of cuspidal edges the relation $$\kappa^2 =
\kappa_n^2+ \kappa_s^2$$
holds (see Corollary \ref{cor:rel:ku:ks}).

Furthermore, using the normal form, we detect (Section
\ref{sec:invar}) invariants up to order three, and show (Section
\ref{sec:main:th}) that the torsion of the curve consisting of
cuspidal edges as a space curve, $\kappa_s$, $\kappa_n$ and these
three invariants determine the cuspidal edge up to order three (see
Theorem \ref{thm:main}). In a joint work \cite{msuy} of M. Umehara,
K. Yamada and the authors, we consider intrinsic properties
 of these invariants and
the relation between boundedness of Gaussian curvature near
cuspidal edges.

The normal form of the Whitney umbrella (or cross-cap) was given by
J. M. West in \cite{west} and  it was shown to be very useful for
considering the differential geometry of surfaces near the singular
point. See also \cite{bw}.
For instance,  using this normal form,
  the authors in \cite{hhnuy} showed
that there are three fundamental intrinsic invariants for
cross-caps, and also the existence of extrinsic invariants is shown.
Some other works where this normal form was very important are
\cite{dt,fh,%fh2,
ggs,hhnuy,mn,joey,Oset-Tari,faridtari}, for example. The
normal form for cuspidal edges is fundamental in this paper
for
finding geometric invariants of cuspidal edges, and the authors believe
that this normal form can be used for other problems
similar to those considered in the references mentioned just above.
%%%%%%%%%%%%%%%%%%%%%%%%%%%%%%%%%%%%%%%%%%%%%%%%%%
%%%%%%%%%%%%%%%%%%%%%%%%%%%%%%%%%%%%%%%%%%%%%%%%%%
\section{Preliminaries}\label{sec:prelim}
%%%%%%%%%%%%%%%%%%%%%%%%%%%%%%%%%%%%%%%%%%%%%%%%%%
%%%%%%%%%%%%%%%%%%%%%%%%%%%%%%%%%%%%%%%%%%%%%%%%%%

The unit cotangent bundle $T^*_1\R^{3}$ of  $\R^{3}$ has the
canonical contact structure and can be identified with the unit
tangent bundle $T_1\R^{3}$. Let $\alpha$ denote the canonical
contact form on it. A map $i:M\to T_1\R^{3}$ is said to be {\em
isotropic\/} if $\dim M=2$ and the pull-back $i^*\alpha$ vanishes
identically. An isotropic immersion is called a {\em Legendrian
immersion\/}. We call
%the map $\pi\circ i$ \resp{
the image of $\pi\circ i$
%}
%the {\em Legendrian map\/} \resp{
the {\em wave front set\/}
%}
 of $i$,
where $\pi:T_1\R^{3}\to\R^{3}$ is the canonical projection and we
denote  it by\/ $W(i)$. Moreover, $i$ is called the {\em Legendrian
lift\/} of $W(i)$. With this framework, we define the notion of
fronts as follows: A map-germ $f:(\R^2,\zv) \to (\R^{3},\zv)$ is
called a
 {\em wave front\/} or a {\em front\/}
 if there exists a unit vector field $\nu$ of $\R^{3}$ along $f$
 such that
  $L=(f,\nu):(\R^2,\zv)\to (T_1\R^{3},\zv)$ is
 a Legendrian immersion by an identification $T_1\R^3 = \R^3 \times S^2$, where $S^2$
 is the unit sphere in $\R^3$ (cf. \cite{AGV}, see also \cite{krsuy}).
A point $q\in (\R^2, \zv)$ is a singular point if $f$ is not an
immersion at $q$.

A singular point $p$ of a map $f$ is called a {\em cuspidal edge\/}
if the map-germ $f$ at $p$ is $\mathcal{A}$-equivalent to
$(u,v)\mapsto(u,v^2,v^3)$ at $\zv$. (Two map-germs
$f_1,f_2:(\R^n,\zv)\to(\R^m,\zv)$ are $\mathcal{A}$-{\em
equivalent}\/ if there exist diffeomorphisms
$S:(\R^n,\zv)\to(\R^n,\zv)$ and $T:(\R^m,\zv)\to(\R^m,\zv)$ such
that $ f_2\circ S=T\circ f_1 $.) Therefore if the singular point $p$
of $f$ is a cuspidal edge, then $f$ at $p$ is a front, and
furthermore, they are one of two types of generic singularities of
fronts (the other one is a {\em swallowtail}\/ which is a singular
point $p$ of $f$ satisfying that $f$ at $p$ is
$\mathcal{A}$-equivalent to $(u,v)\mapsto(u, u^2v + 3u^4, 2 uv + 4
u^3)$ at $\zv$). So we state notations and a fundamental property of
singularities of fronts, which are used in the following sections.

 Let $f:(\R^2, \zv)\to(\R^3,\zv)$ be a
front and
$\nu$  a unit normal vector field along $f$, and take $(u,v)$
as a coordinate system of the source. The function
$$
\lambda=\det (f_u,f_v,\nu)
$$
is called the {\em signed area density}, where $f_u=\partial
f/\partial u$ and $f_v=\partial f/\partial v$. A singular point $q$
of $f$ is called {\em non-degenerate}\/ if $d\lambda(q)\ne0$. If $q$
is a non-degenerate singular point of $f$, then the set of singular
points $S(f)$ is a regular curve, which we shall call the
\emph{singular curve} at $q$, and we shall denote by $\gamma$ a
parametrization for this curve. The tangential 1-dimensional vector
space of the singular curve $\gamma$ is called the \emph{singular
direction}. Furthermore, if $q$ is a non-degenerate singular point,
then a non-zero smooth vector field $\eta$ on $(\R^2,\zv)$ such that
$df(\eta)=\zv$ on $S(f)$ is defined. We call $\eta$ a {\em null
vector field} and its direction the \emph{null direction}.  For
details see \cite{front}.

%%%%%%%%%%%%%%%%%%%%%%%%%%%%%%%%%%%%%%%%%%%%%%%%%%%%%%%%%
\begin{lemma}
{\rm (\cite[Corollary 2.5, p.735]{suy3}}, {\rm see also
\cite{krsuy})} Let\/ $\zv$ be a singular point of a front\/
$f:(\R^2, \zv)\to(\R^3,\zv)$. Then\/ $\zv$ is a cuspidal edge if and
only if\/ $d\lambda(\eta)\ne0$ at\/ $\zv$. In particular, at a
cuspidal edge, the null direction and the singular direction are
transversal.
\end{lemma}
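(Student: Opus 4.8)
The plan is to first convert the analytic condition $d\lambda(\eta)\neq 0$ into the geometric statement that the null direction is transverse to the singular direction, and then to recognise the transverse picture as the standard cuspidal edge. Since $\lambda$ vanishes identically along the singular curve $\gamma$, differentiating $\lambda\circ\gamma\equiv 0$ gives $d\lambda(\gamma')=0$, so the singular direction lies in $\Ker d\lambda$. At a non-degenerate singular point $d\lambda\neq 0$, and therefore $\Ker d\lambda$ is exactly the one-dimensional singular direction $\spann{\gamma'}$. Consequently $d\lambda(\eta)\neq 0$ holds if and only if $\eta\notin\Ker d\lambda$, that is, if and only if the null direction is transverse to the singular direction; this is precisely the final assertion of the lemma, which therefore follows at once from the main equivalence.

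For the implication that a cuspidal edge satisfies $d\lambda(\eta)\neq 0$, I would use the regularity of the edge. A cuspidal edge is a non-degenerate singular point (this may be checked on the model $(u,v^2,v^3)$ and is an $\mathcal{A}$-invariant property), so the equivalence above applies. Moreover the restriction of $f$ to its singular set is an immersion, because for the model it is $u\mapsto(u,0,0)$ and immersivity of this restriction is preserved under $\mathcal{A}$-equivalence. Hence $df(\gamma')\neq\zv$, so $\gamma'\notin\Ker df$; the singular direction is not the null direction, the two are transverse, and by the previous paragraph $d\lambda(\eta)\neq 0$.

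For the converse, assume $\zv$ is non-degenerate with $d\lambda(\eta)\neq 0$, equivalently with the null and singular directions transverse. I would choose adapted coordinates $(u,v)$ in which the singular set is the $u$-axis and $\eta=\partial_v$ along it, so that $f_v(u,0)\equiv\zv$ and hence also $f_{uv}(u,0)\equiv\zv$. Differentiating the relations $\inner{\nu}{f_u}=0$, $\inner{\nu}{f_v}=0$ and $\inner{\nu}{\nu}=1$ with respect to $v$ and restricting to the $u$-axis gives $\inner{\nu}{f_{vv}}=0$, $\inner{\nu_v}{f_u}=0$ and $\inner{\nu_v}{\nu}=0$; together with the front condition $\nu_v\neq\zv$ (which expresses that $L=(f,\nu)$ is immersive where $f_v=\zv$), these force $\nu_v$ to be a nonzero multiple of $f_u\times\nu$. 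A short computation gives $d\lambda(\eta)=\det(f_u,f_{vv},\nu)$ at $\zv$, so the hypothesis is equivalent to the linear independence of $f_u$ and $f_{vv}$ there. Differentiating $\inner{\nu}{f_v}=0$ twice yields $\inner{\nu}{f_{vvv}}=-2\inner{\nu_v}{f_{vv}}$ on the $u$-axis, and the description of $\nu_v$ makes the right-hand side a nonzero multiple of $\det(f_u,f_{vv},\nu)\neq 0$; thus $f_{vvv}\notin\spann{f_u,f_{vv}}$ and $\{f_u,f_{vv},f_{vvv}\}$ is a basis of $\R^3$ at $\zv$. This upgrade, by which the front condition turns the independence of two vectors into that of three, is the crux of the argument.

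It then remains to promote the jet data --- namely $f_v(u,0)\equiv\zv$ together with $\{f_u,f_{vv},f_{vvv}\}$ a basis --- to an $\mathcal{A}$-equivalence with $(u,v^2,v^3)$. After a linear change in the target carrying this basis to $\{e_1,2e_2,6e_3\}$, the three-jet of $f$ agrees with that of the model, and one clears the higher-order terms by successive changes of coordinates in the source and target. I expect this final normal-form reduction to be the main obstacle: the delicate point is to straighten the one-parameter family of transverse ordinary cusps $v\mapsto f(u,v)$ simultaneously in $u$, which is where the regularity of the singular curve and the constancy of the cusp type along it enter, and where a finite-determinacy property of $(u,v^2,v^3)$ can be invoked to terminate the process.
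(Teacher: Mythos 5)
The paper offers no proof of this lemma --- it is quoted from \cite{suy3} (see also \cite{krsuy}) --- so your attempt stands on its own. Most of it is sound and follows the standard route: the identification of $d\lambda(\eta)\ne0$ with transversality of the null and singular directions at a non-degenerate point, the verification that a cuspidal edge is non-degenerate with $f|_{S(f)}$ an immersion, and, most importantly, the computation in adapted coordinates showing that the front condition forces $\nu_v$ to be a nonzero multiple of $f_u\times\nu$, whence $d\lambda(\eta)=\det(f_u,f_{vv},\nu)\ne0$ is equivalent to $\{f_u,f_{vv},f_{vvv}\}$ being a basis of $\R^3$ at $\zv$. That is exactly the jet condition one needs.

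The gap is in the final step, and the tool you invoke to close it does not exist: $(u,v^2,v^3)$ is \emph{not} finitely $\mathcal{A}$-determined. By the Mather--Gaffney criterion, finite $\mathcal{A}$-determinacy of a germ $(\R^2,\zv)\to(\R^3,\zv)$ requires the instability locus to be isolated, whereas the cuspidal edge model fails to be stable along the entire curve $\{v=0\}$ (the only stable germs $\R^2\to\R^3$ are immersions and cross-caps), so no determinacy theorem will ``terminate the process.'' The intermediate claim is also off: a single linear change of target coordinates does not make the $3$-jet agree with that of the model, since terms in $u^2$, $u^3$ and $uv^2$ survive, as the normal form \eqref{eq:west3} makes explicit. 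What actually closes the argument is an explicit reduction of the kind carried out in Section \ref{sec:west:type}: use the Malgrange preparation theorem to write $f=(u,\,a_2(u)+v^2b_2(u,v),\,a_3(u)+v^2b_3(u,v))$, remove $a_2,a_3$ by a target diffeomorphism, normalize $b_2$ by the source change $\tilde v=v\sqrt{2b_2}$, and finally show that $(u,v^2,v^3h(u,v))$ with $h(\zv)\ne0$ is $\mathcal{A}$-equivalent to $(u,v^2,v^3)$ by splitting $h$ into its even and odd parts in $v$ and absorbing the even part into the second component; this is the content of the proof in \cite{krsuy}. Until that reduction is supplied, the ``if'' direction --- which is the substance of the lemma --- remains unproved.
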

%%%%%%%%%%%%%%%%%%%%%%%%%%%%%%%%%%%%%%%%%%%%%%%%%%%%%%%%%%%%%

In this paper we shall use the first and second fundamental forms
 defined in \cite{mn} for surfaces  in $\R^3$ with corank 1 singularities  and given as follows.
Let $q\in \R^2$ be a corank 1 singular point of $f:\R^2\to\R^3$ and
$p=f(q)$. The Euclidean metric $\inner{~}{~}$ of $\R^3$ induces a
pseudometric on $T_{q}\R^2$ given by the {\em first fundamental
form\/} $\firff:T_{q}\R^2 \times T_{q}\R^2\rightarrow \R$ defined by
$\firff(X,Y)=\inner{df_q(X)}{df_q(Y)}$, where $df_q$ is the
differential map of $f$ at $q$. The coefficients of $\firff$ at $q$
are
$$E(q) = \inner{f_u}{f_u}(q),\
F(q) = \inner{f_u}{f_v}(q)\ \text{and}\
G(q) = \inner{f_v}{f_v}(q),$$
and, given $ X=x
\partial_u+y \partial_v\in T_{q}\R^2 $,
then $\firff(X,X)= x^2E(q) + 2xy F(q) + y^2 G(q)$, where $(u,v)$ is
a coordinate system on the source and $\partial_u=(\partial/\partial
u)_q$ and $\partial_v=(\partial/\partial v)_q$.

Let us denote the image of $f$ by $M$ and the tangent line to $M$ at
$p$ by $T_{p}M = \image df_q$.  So there is a plane $N_{p}M$
satisfying $T_p{\R}^3=T_{p}M \oplus N_{p}M$. Consider the orthogonal
projection
$$
\begin{array}{rclrl}
 \bot: T_p\R^3 & \to
N_{p}M\\
 w &\to w^{\bot}
\end{array}\, .$$
The {\em second fundamental form\/} $\secff: T_{q}\R^2 \times
T_{q}\R^2 \rightarrow N_{q}M$ is defined by $\secff(\partial_u,
\partial_u)= f_{uu}^\bot(q)$, $\secff(\partial_u,
\partial_v)= f_{uv}^\bot(q)$ and $\secff(\partial_v,
\partial_v)= f_{vv}^\bot(q)$, and we extend
it in the unique way as a symmetric bilinear map. The {\em second
fundamental form along a normal vector} $\nu \in N_{p}M$ is the
function $\secff_\nu: T_{q}\R^2 \times T_{q}\R^2 \rightarrow \R$
defined by $\secff_\nu(X,Y) = \inner{\secff(X,Y)}{\nu}$ and its
coefficients at $q$ are:
$$
l_\nu(q)=\inner{f_{uu}^\bot(q)}{\nu},\quad
m_\nu(q)=\inner{f_{uv}^\bot(q)}{\nu},\quad
n_\nu(q)=\inner{f_{vv}^\bot(q)}{\nu}.
$$
It is showed in \cite{mn} that $\Delta_{p} = \{\secff(X,X) \, |
\, \firff(X,X)^{1/2} = 1\}$ is a parabola in $N_pM$, which can
degenerate in a half-line, line or a point. ($\Delta_{p}$ is called
the {\em curvature parabola\/} of $M$ at ${p}$.) If $p$ is a
cuspidal edge, $\Delta_{p}$
is a half-line in $N_{p}M$.  For details see \cite{mn}.
%
%      Let $\eta: \R \rightarrow N_pM$ a parametrization for the line
%                      $\ell$ such that $\Delta_p \subset \ell$. Then
%                                                    \begin{equation}
%       \kappa_n(t) := \frac{|\eta(s) \times \eta'(s)|}{|\eta'(s)|} =
%             \left|\eta(s) \cdot (\frac{\eta'(s)}{|\eta'(s)|} \times
%      \frac{\hat{\gamma}'(t)}{|\hat{\gamma}'(t)|})\right|\,,
%                                                      \end{equation}

%%%%%%%%%%%%%%%%%%%%%%%%%%%%%%%%%%%%%%%%%%%%%%%%%%%%%%%%%%%%%%%%%%%%%%%%%
%%%%%%%%%%%%%%%%%%%%%%%%%%%%%%%%%%%%%%%%%%%%%%%%%%%%%%%%%%%%%%%%%%%%%%%%%
\section{Normal form of cuspidal edges}
\label{sec:west:type}
%%%%%%%%%%%%%%%%%%%%%%%%%%%%%%%%%%%%%%%%%%%%%%%%%%%%%%%%%%%%%%%%%%%%%%%%%
In this section we give a normal form of cuspidal edges by
using only
coordinate transformations on the source and isometries on the
target.
These changes of coordinates do preserve the geometry of
the image.
% This type of normal form was first obtained in \cite{west}
%by J. M. West for Whitney umbrella. This is convenient for
%considering the differential geometry of surfaces at singular points
%(see \cite{fh,ggs,hhnuy,mn,joey,Oset-Tari}, for example). We give a
%similar formula for cuspidal edges.

Let $f:(\R^2,\zv)\to(\R^3,\zv)$ be a map-germ and $\zv$ a cuspidal
edge with $f=(f_1,f_2,f_3)$. Let $\nu=(\nu_1,\nu_2,\nu_3)$ be a unit
normal vector field along  $f$ and $(u,v)$  the usual Cartesian
coordinate system of $\R^2$. So $\rank df_{\zv}=1$ and then we may
assume that $f_u(\zv)=\big((f_1)_u(\zv),0,0\big)$, where
$(f_1)_u(\zv)\ne0$,
 by a
rotation of $\R^3$ if necessary.
The map on the source $(\tilde u,\tilde
v)=(f_1(u,v),v)$ is a coordinate transformation. In fact,
$$
\det\pmt{\tilde u_u&\tilde u_v\\ \tilde v_u&\tilde v_v} =
\det\pmt{\tilde u_u&\tilde u_v\\ 0&1}= \tilde
u_u=(f_1)_u\ne0\quad\text{at}\ \zv.$$

By coordinates $(\tilde u,\tilde v)$, $f$ is written  $f(\tilde
u,\tilde v) = (\tilde u,\tilde f_2(\tilde u,\tilde v) ,\tilde
f_3(\tilde u,\tilde v))$ for some functions $\tilde f_2, \tilde
f_3$. Needless to say, $ (\tilde f_2)_u=(\tilde f_3)_u= (\tilde
f_2)_v=(\tilde f_3)_v=0$ at $\zv$.

Since $\zv$ is a cuspidal edge and $\eta=\partial_v$ at $\zv$ is a
null vector field, then $\lambda_v\ne0$. Rewriting $f(u,v) =
(u,f_2(u,v),f_3(u,v))$, we have
$$
\lambda
=
\det\pmt{
1&(f_2)_u&(f_3)_u\\
0&(f_2)_v&(f_3)_v\\
\nu_1&\nu_2&\nu_3}
$$
and therefore $(0,(f_2)_{vv},(f_3)_{vv})\ne \zv$. Since $S(f)$ is a
regular curve, $S(f)$ is transverse to the $v$-axis. Thus $S(f)$
can be parametrized by $(u,g(u))$. Considering the coordinate
transformation on the source
$$
\tilde u=u,\quad \tilde v=v-g(u),
$$
we may assume that $f(u,v)=(u,f_2(u,v),f_3(u,v))$
and $S(f)=\{v=0\}$.

On the other hand,
there exist functions $a_2,a_3,b_2,b_3$ such that
$$
f_i(u,v)=a_i(u)+vb_i(u,v),\quad i=2,3.
$$
Since  $f_v= \zv$ on $\{v=0\}$, it holds that $b_i(u,v)=0$ on
$\{v=0\}$, $i=1,2$. Then by the Malgrange preparation theorem, there
exist functions $\bar b_2,\bar b_3$ such that $b_i(u,v)=v\bar
b_i(u,v)$, $i=1,2$.

Rewriting $\bar b$ to $b$, we may assume that $f$ is of the form
$$
f(u,v)
=\big(u,a_2(u)+v^2b_2(u,v),a_3(u)+v^2b_3(u,v)\big).
$$

By the above arguments, $f_{vv}(\zv)\ne \zv$, that is,
$(b_2,b_3)=((f_2)_{vv},(f_3)_{vv})\ne(0,0)$ at $\zv$.

Now, using  the rotation of $\R^3$ given by the matrix
\begin{equation}
\label{eq:rot}
A_\theta=\pmt{1&\zv\\
\zv&\tilde A_\theta},\quad
\tilde A_\theta=\pmt{
\cos\theta&-\sin\theta\\
\sin\theta&\cos\theta},
\end{equation}
we get $$
\begin{array}{l}
A_\theta f = \big(u, \cos\theta a_2(u)-\sin\theta a_3(u)
+v^2[\cos\theta b_2(u,v)-\sin\theta b_3(u,v)],\\
\hspace{40mm} \sin\theta a_2(u)+\cos\theta a_3(u) +v^2[\sin\theta
b_2(u,v)+\cos\theta b_3(u,v)] \big).
\end{array}
$$
Since $(b_2,b_3)\ne(0,0)$ at $\zv$, there exists some number
$\theta$ such that
\begin{equation}
\label{eq:change}
\cos\theta b_2(\zv)-\sin\theta b_3(\zv)>0
\quad\text{and}\quad
\sin\theta b_2(\zv)+\cos\theta b_3(\zv)=0.
\end{equation}
Setting
$$
\begin{array}{rcl}
\bar a_2(u)&=&\cos\theta a_2(u)-\sin\theta a_3(u),\\
\bar a_3(u)&=&\sin\theta a_2(u)+\cos\theta a_3(u),\\
\bar b_2(u,v)&=&\cos\theta b_2(u,v)-\sin\theta b_3(u,v),\\
\bar b_3(u,v)&=&\sin\theta b_2(u,v)+\cos\theta b_3(u,v),
\end{array}
$$
$f$ is rewritten as
$$
f(u,v) =(u,\bar a_2(u)+v^2\bar b_2(u,v),\bar a_3(u)+v^2\bar b_3(u,v)),
$$
with
$\bar a_2(0)=\bar a_2'(0)=\bar a_3(0)=\bar a_3'(0)=0$,
$\bar b_2(\zv)\ne0$ and $\bar b_3(\zv)=0$,
where $\bar a_2'=d\bar a_2/du$, for example.
We remark that $\bar b_2(\zv)>0$ holds.

Next, using the coordinate transformation on the source
$$
\tilde u=u,\quad
\tilde v=v\sqrt{2\bar b_2(u,v)},
$$
one can rewrite $f$ as
$$
f(\tilde u,\tilde v) =\left(\tilde u,\bar a_2(\tilde u)+\dfrac{v^2}{2}, \bar
a_3(\tilde u)+\tilde v^2\tilde b_3(\tilde u,\tilde v)\right)\,,
$$
for some function $\tilde b_3$ satisfying $\tilde b_3(\zv)=0$.

Rewriting $\tilde u$ as $u$, $\bar a$ as $a$
 and $\tilde b$ as $b$, we may
assume that $f$ is of the form
$$
f(u,v)
=\left(u,a_2(u)+\dfrac{v^2}{2},a_3(u)+v^2b_3(u,v)\right).
$$
Since $b_3(\zv)=0$, there exist functions $a_4(u)$ and $b_4(u,v)$
such that $b_3(u,v)=a_4(u)+vb_4(u,v)$, with  $a_4(0)=0$. We remark
that, as $\zv$ is a cuspidal edge, $d\lambda(\eta)\ne0$. This is
equivalent to $(b_3)_v(\zv)\ne0$. Thus $b_4(\zv)\ne0$.

Hence $f$ can be written as
$$
f(u,v) =\left(u,a_2(u)+\dfrac{v^2}{2},a_3(u)+v^2a_4(u)+v^3b_4(u,v)\right).
$$
Changing the numbering, we get
\begin{equation}\label{eq:west}
f(u,v) =\left(u,a_1(u)+\dfrac{v^2}{2},b_2(u)+v^2b_3(u)+v^3b_4(u,v)\right).
\end{equation}
where $a_1(0)=a_1'(0)=b_2(0)=b_2'(0)=b_3(0)=0$, $b_4(\zv)\ne0$. By
rotations 
$(u,v)\mapsto(-u,-v)$ on $\R^2$ and
$(x,y,z)\mapsto(-x,y,-z)$ on $\R^3$, we may assume that
$b_2''(0)\geq0$.
Summerizing up the above arguments,
we have the following theorem.
\begin{theorem}
Let\/ $f:(\R^2,\zv)\to(\R^3,\zv)$ be a map-germ and\/ $\zv$ a
cuspidal edge. Then there exist a diffeomorphism-germ\/ $\phi:(\R^2,
\zv)\to(\R^2, \zv)$ and an isometry-germ\/ $\Phi:(\R^3,
\zv)\to(\R^3, \zv)$ satisfying that
\begin{equation}
\label{eq:west3}
\begin{array}{l}
\Phi\circ
f\circ\phi (u,v)\\[2mm]
\hspace{10mm}
 =
\displaystyle
\Big(u,
\frac{a_{20}}{2}u^2+\frac{a_{30}}{6}u^3+\frac{1}{2}v^2,
\frac{b_{20}}{2}u^2+\frac{b_{30}}{6}u^3+\frac{b_{12}}{2}uv^2
+\frac{b_{03}}{6}v^3\Big)+h(u,v),\\[4mm]
\hspace{105mm}
(b_{03}\ne0,\ b_{20}\geq0),
\end{array}
\end{equation}
where
$$h(u,v)= \big(
0,\,
u^4h_1(u),
u^4h_2(u)+u^2v^2h_3(u)+uv^3h_4(u)+v^4h_5(u,v)
\big),
$$
with\/ $h_1(u),h_2(u),h_3(u),h_4(u),h_5(u,v)$  smooth functions.
\end{theorem}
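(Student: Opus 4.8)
The plan is to use the reduction already carried out in this section as the backbone of the proof and to close the argument with a single Taylor expansion. Composing all the source coordinate changes performed above into one diffeomorphism-germ $\phi$, and composing the first rotation, the rotation $A_\theta$ of \eqref{eq:rot}, and the two final orientation-reversing rotations into one isometry-germ $\Phi$ (each of which fixes $\zv$ and preserves the Euclidean metric), we have that $\Phi\circ f\circ\phi$ is exactly the map in the form \eqref{eq:west},
$$
\Phi\circ f\circ\phi(u,v)=\Big(u,\,a_1(u)+\tfrac{v^2}{2},\,b_2(u)+v^2b_3(u)+v^3b_4(u,v)\Big),
$$
with $a_1(0)=a_1'(0)=b_2(0)=b_2'(0)=b_3(0)=0$, $b_4(\zv)\neq0$ and $b_2''(0)\geq0$. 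Thus nothing remains but to expand the four coefficient functions and to check that the remainders assemble into the prescribed shape of $h$.

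First I would expand the two functions of $u$ alone. As $a_1$ and $b_2$ both vanish to second order at $0$, Taylor's theorem with smooth remainder gives $a_1(u)=\tfrac{a_{20}}{2}u^2+\tfrac{a_{30}}{6}u^3+u^4h_1(u)$ and $b_2(u)=\tfrac{b_{20}}{2}u^2+\tfrac{b_{30}}{6}u^3+u^4h_2(u)$, where $a_{20}=a_1''(0)$, $a_{30}=a_1'''(0)$, $b_{20}=b_2''(0)$, $b_{30}=b_2'''(0)$ and $h_1,h_2$ are smooth; here $b_{20}\geq0$ follows from the arranged sign. Since $b_3(0)=0$, I would likewise write $b_3(u)=\tfrac{b_{12}}{2}u+u^2h_3(u)$, so that $v^2b_3(u)$ furnishes the monomial $\tfrac{b_{12}}{2}uv^2$ together with the remainder $u^2v^2h_3(u)$. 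The term $v^2/2$ in the second slot is already a monomial and needs no expansion.

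The only term requiring a little care is the cubic $v^3b_4(u,v)$. Since $b_4(\zv)\neq0$, putting $b_{03}=6b_4(\zv)$ splits off the monomial $\tfrac{b_{03}}{6}v^3$ with $b_{03}\neq0$, as required. For the remainder, the function $b_4(u,v)-b_4(\zv)$ vanishes at the origin, so by Hadamard's lemma it equals $u\,c(u,v)+v\,d(u,v)$ with $c,d$ smooth; writing in addition $c(u,v)=h_4(u)+v\,\tilde c(u,v)$ and setting $h_5(u,v)=d(u,v)+u\,\tilde c(u,v)$ gives
$$
v^3b_4(u,v)=\tfrac{b_{03}}{6}v^3+uv^3h_4(u)+v^4h_5(u,v),
$$
which is precisely the contribution recorded in the third slot of $h$. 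Collecting the monomials yields \eqref{eq:west3}, with the error exactly $h(u,v)=\big(0,\,u^4h_1(u),\,u^4h_2(u)+u^2v^2h_3(u)+uv^3h_4(u)+v^4h_5(u,v)\big)$.

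I do not expect a genuine obstacle at this stage: the substantive difficulties — the rotation $A_\theta$ chosen to annihilate the leading coefficient of the third component and the square-root substitution $\tilde v=v\sqrt{2\bar b_2(u,v)}$, which is admissible only because the sign was fixed so that $\bar b_2(\zv)>0$ — have already been dispatched in the reduction leading to \eqref{eq:west}. Within the present packaging step, the one point that takes slight attention is ensuring that $h_4$ depends on $u$ alone; this is exactly why the extra split $c(u,v)=h_4(u)+v\,\tilde c(u,v)$ is needed, the leftover $uv^4\tilde c$ being absorbed into $v^4h_5$.
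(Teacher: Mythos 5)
Your proposal is correct and follows the paper's own route: the theorem is obtained exactly as you describe, by composing the source coordinate changes and target rotations of the reduction to \eqref{eq:west} and then Taylor-expanding $a_1$, $b_2$, $b_3$ and $b_4$ (the paper leaves this final expansion implicit under ``summarizing the above arguments,'' and your Hadamard-lemma treatment of $v^3b_4(u,v)$, with the extra split $c(u,v)=h_4(u)+v\tilde c(u,v)$, supplies it correctly). One trivial slip: the rotation $(u,v)\mapsto(-u,-v)$ acts on the source and so belongs to $\phi$ rather than $\Phi$, and both final rotations are in fact orientation-preserving; neither point affects the argument.
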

We call this parametrization the {\em normal form}\/ of
cuspidal edges. One can easily verify that all coefficients of
\eqref{eq:west3} are uniquely determined, since the rotation
\eqref{eq:change} means that $\eta\eta f(0)=(0,1,0)$, where $\eta f$
means the directional derivative $df(\eta)$. This unique expansion
of a cuspidal edge implies that the above coefficients can be
considered as geometric invariants of the cuspidal edge. It means
that any cuspidal edge has this form using only coordinate changes
on the source and isometries of $\R^3$.
% The
%same type of consideration for cross caps can be found in
%\cite{hhnuy}.

We shall deal with the six geometric invariants of cuspidal
edges given by the formula \eqref{eq:west3} in the following
sections.

%%%%%%%%%%%%%%%%%%%%%%%%%%%%%%%%%%%%%%%%%%%%%%%%%%%%%%%%%%%%%%%%%%%%%%%%%
%%%%%%%%%%%%%%%%%%%%%%%%%%%%%%%%%%%%%%%%%%%%%%%%%%%%%%%%%%%%%%%%%%%%%%%%%
%%%%%%%%%%%%%%%%%%%%%%%%%%%%%%%%%%%%%%%%%%%%%%%%%%%%%%%%%%%%%%%%%%%%%%%%%
\section{Singular curvature, normal curvature and umbilic curvature}
%%%%%%%%%%%%%%%%%%%%%%%%%%%%%%%%%%%%%%%%%%%%%%%%%%%%%%%%%%%%%%%%%%%%%%%%%
In this section, we review the singular
curvature $\kappa_s$,
the limiting normal curvature $\kappa_n$
(\cite{front}) and the umbilic curvature
$\kappa_u$ (\cite{mn}).
We show $\kappa_n=\kappa_u$ and
compute the curvature $\kappa$ and the torsion $\tau$, as well as
$\kappa_s$ and $\kappa_n$, of the cuspidal edge given by
\eqref{eq:west3}. As an immediate consequence
we obtain an expression
relating the singular and limiting normal
curvatures with the curvature of
the cuspidal curve as a space curve.

Let $f:(\R^2,\zv) \to (\R^{3},\zv)$ be a map-germ, and $\zv$ a
cuspidal edge. Let $\gamma(t)$ be the singular curve, $\hat{\gamma}
= f \circ \gamma$ and choose the null vector $\eta(t)$ such that
$(\gamma'(t),\eta(t))$ is a positively oriented frame field along
$\gamma$. The \emph{singular curvature}\/ $\kappa_s$ and the {\em
limiting normal curvature}\/ $\kappa_{\nu}$ at $t$ are the functions
(\cite{front})
\begin{equation}\label{eq:def:ks}
\begin{array}{rcl}
\kappa_s (t) &=&
\displaystyle
 \sgn (d\lambda(\eta))\, \frac{\det
(\hat{\gamma}'(t), \hat{\gamma}''(t),
\nu(\gamma(t)))}{|\hat{\gamma}'(t)|^3} \, = \, \sgn (d\lambda(\eta))\,
\frac{\langle \hat{\gamma}''(t) ,
n(t)\rangle}{|\hat{\gamma}'(t)|^2}\,,\\[4mm]
\kappa_{\nu} (t) &=& \displaystyle \frac{
\inner{\hat\gamma''(t)}{\nu(\gamma(t))} }{|\hat{\gamma}'(t)|^2}\,,\\[4mm]
n(t) &=&
\nu(\gamma(t)) \times
\dfrac{\hat{\gamma}'(t)}{|\hat{\gamma}'(t)|},
\end{array}
\end{equation}
where $\times$ denotes the vector product in $\R^3$. Then
$\kappa_s(t)$ can be considered as the limiting geodesic curvature
of curves with the singular curve on their right-hand sides. 
The definitions given in (\ref{eq:def:ks}) do not depend on the
parametrization for the singular curve, nor the orientation of
$\R^2$. Furthermore, $\kappa_s$ does not depend on the choice of
$\nu$, and $\kappa_{\nu}$ depends on the choice of $\nu$. For more
details see \cite{front}. We consider the absolute value of
$\kappa_{\nu}$ and set $\kappa_n=|\kappa_{\nu}|$. We call $\kappa_n$
the {\em absolute normal curvature} or, shortly,  the {\em normal
curvature}
 of the cuspidal edge.

The umbilic curvature $\kappa_u$ is a function defined in \cite{mn}
for corank 1 singular points of surfaces in $\R^3$, unless for
Whitney umbrellas (i.e., surfaces image of any map germ $(\R^2, \zv)
\rightarrow (\R^3, \zv)$ which is $\mathcal{A}$-equivalent to
$(x,y^2,xy)$), and so $\kappa_u$ is well defined for the cuspidal
edge $f$ at $\hat{\gamma}(t)$. Its definition is given in terms of
the first and second fundamental forms of $M = \image f$ defined in
Section \ref{sec:prelim}.

Under the above setting,
let $\alpha: \R \rightarrow N_pM$ be a parametrization for
$\Delta_p$, where $p = \hat{\gamma}(t)$. 
Since $\Delta_p$ is a half-line,
$|\alpha(s) \times \alpha'(s)|/|\alpha'(s)|$
does not depend on the parametrization
$\alpha(s)$ for $\Delta_p$, 
nor on the value $s$ satisfying $\alpha'(s) \neq 0$.
Set $\kappa_u(t)=|\alpha(s) \times \alpha'(s)|/|\alpha'(s)|$.
Since $N_pM$ is a normal plane of $\hat{\gamma}'(t)$,
\begin{equation}\label{eq:def:ku}
\kappa_u(t)=
\frac{|\alpha(s) \times \alpha'(s)|}{|\alpha'(s)|}
=
\left|
\inner{\frac{|\alpha(s) \times \alpha'(s)|}{|\alpha'(s)|}}
{\frac{\hat{\gamma}'(t)}{|\hat{\gamma}'(t)|}}
\right|
=
\frac{
|\det (\alpha(s), \alpha'(s), \hat{\gamma}'(t))|}{|\alpha'(s)
\times \hat{\gamma}'(t)|}
\end{equation}
holds for any $s$ such that $\alpha'(s) \neq 0$.
Notice that $\kappa_u(t)$ is the distance
between $p$ and the line $\ell$ containing $\Delta_p$.

For later computation, it is convenient to take an \emph{adapted
pair of vector fields} and an \emph{adapted coordinate system}. If a
singular point of the map-germ $f$ is a cuspidal edge, then $S(f)$
is a regular curve on the source and the null vector field is
transverse to $S(f)$. Thus we can take a pair of vector fields and a
coordinate system as follows: A pair of vector field $(\xi, \eta)$
on $(\R^2,\zv)$ is called {\em adapted\/} if it satisfies:
\begin{enumerate}
\item $\xi$ is tangent to $S(f)$ on $S(f)$,
\item $\eta$ is a null vector on $S(f)$, and
\item $(\xi,\eta)$ is positively oriented.
\end{enumerate}
A coordinate system $(u,v)$ on $(\R^2,\zv)$ is called {\em
adapted\/} if it satisfies:
\begin{enumerate}
\item the $u$-axis is the singular curve,
\item $\partial_v$ gives a null vector field on the $u$-axis,
and
\item there are no singular points except the $u$-axis.
\end{enumerate}
We remark that the coordinate system $(u,v)$ in the formula
\eqref{eq:west3} is adapted.
Condition (1) for adapted vector field $(\xi,\eta)$ is characterized by
$\xi\lambda=0$ on $S(f)$, where $\lambda$ is
the signed area density,
and Condition (2) is characterized by
$\eta f=0$ on $S(f)$.
Formulas for coefficients in \eqref{eq:west}
by using adapted coordinate systems
are stated in the following sections.
In \cite{msuy}, we also define adapted coordinate system. In that
definition, a condition $|f_u|=1$ is imposed in addition to the
above, but we do not assume it here.
\begin{remark}
\label{rem:adapted} If $(\xi, \eta)$ is an adapted pair of vector
fields, then $\xi\eta f= \zv$ holds on $S(f)$, since $\eta f=\zv$ on
$S(f)$. Furthermore, $\{\xi f, \eta\eta f,\nu\}$ is linearly
independent, since $\det(\xi f, \eta\eta f,\nu)=\eta\lambda\ne0$ at
$\zv$. For the same reason, if $(u,v)$ is an adapted coordinate
system, then $f_{uv}= \zv$ holds on $S(f)$ and $\{f_u,f_{vv},\nu\}$
is linearly independent.
\end{remark}

Taking an adapted coordinate system, it holds that
\begin{equation}\label{eq:ks:adap}
\kappa_s(u,0)= \sgn(\lambda_v) \frac{\det (f_u,
f_{uu},\nu)}{|f_u|^3}(u,0).
\end{equation}
See \cite{front} for details.

For an adapted pair of vector fields $(\xi,\eta)$ on $(\R^2,\zv)$,
it
can be easily seen that
\begin{equation}
\kappa_s(u,v)= \sgn(\eta\lambda) \frac{\det (\xi f, \xi\xi
f,\nu)}{|\xi f|^3}(u,v), \quad (u,v)\in S(f),
\end{equation}
and
\begin{equation}
\label{eq:kn} \kappa_n(u,v) = \frac{|\det(\xi f, \eta\eta f, \xi\xi
f)|} {|\xi f|^2|\xi f\times \eta\eta f|}(u,v) \Bigg(=
\frac{\sgn(\eta\lambda\,\inner{\nu}{\xi\xi f}) \det(\xi f, \eta\eta
f, \xi\xi f)} {|\xi f|^2|\xi f\times \eta\eta f|}(u,v)\Bigg),
\end{equation}
where $(u,v)\in S(f)$.

\begin{lemma}
The formula\/ $\eqref{eq:kn}$ of\/ $\kappa_n(u,v)$ does not depend
on the choice of pairs of adapted vector fields.
\end{lemma}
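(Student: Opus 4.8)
The plan is to relate two arbitrary adapted pairs to each other and check directly that the right-hand side of \eqref{eq:kn}, evaluated on $S(f)$, is unchanged. Fix an adapted pair $(\xi,\eta)$ and let $(\tilde\xi,\tilde\eta)$ be a second one. Near $\zv$ I would expand $\tilde\xi=p\xi+q\eta$ and $\tilde\eta=r\xi+s\eta$ for smooth functions $p,q,r,s$. Because $S(f)$ is a one-dimensional curve and both $\xi$ and $\tilde\xi$ are tangent to it along $S(f)$ (Condition (1)), they are proportional there, which forces $q=0$ on $S(f)$; likewise, since along $S(f)$ the kernel of $df$ is exactly the null direction and both $\eta,\tilde\eta$ lie in it (Condition (2)), we get $r=0$ on $S(f)$. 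Positivity of both frames (Condition (3)) gives $p\neq0$ and $s\neq0$ on $S(f)$. The only additional inputs I need are $\eta f=\zv$ on $S(f)$ and the linear independence of $\{\xi f,\eta\eta f,\nu\}$ from Remark \ref{rem:adapted}.

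Next I would compute the three vectors in \eqref{eq:kn} for $(\tilde\xi,\tilde\eta)$, restricted to $S(f)$, repeatedly discarding terms carrying a factor of $q$, $r$, or $\eta f$ (all of which vanish on $S(f)$). From $\tilde\xi f=p\,\xi f+q\,\eta f$ I get $\tilde\xi f=p\,\xi f$. Differentiating once more, I expect to obtain on $S(f)$
\[
\tilde\eta\tilde\eta f=(\tilde\eta r)\,\xi f+s^2\,\eta\eta f,
\qquad
\tilde\xi\tilde\xi f=(\tilde\xi p)\,\xi f+p^2\,\xi\xi f .
\]
The structural point that makes the lemma work is that each correction term is a scalar multiple of $\xi f$, hence of $\tilde\xi f$.

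Finally I substitute these into \eqref{eq:kn}. In $\det(\tilde\xi f,\tilde\eta\tilde\eta f,\tilde\xi\tilde\xi f)$ the two correction terms are parallel to the first column $\tilde\xi f=p\,\xi f$ and so contribute nothing, leaving $p^3s^2\det(\xi f,\eta\eta f,\xi\xi f)$; similarly $\tilde\xi f\times\tilde\eta\tilde\eta f=ps^2\,(\xi f\times\eta\eta f)$ since the $\xi f\times\xi f$ term drops, and $|\tilde\xi f|^2=p^2|\xi f|^2$. Taking absolute values, the common factor $|p|^3s^2$ cancels between numerator and denominator and recovers \eqref{eq:kn} for $(\xi,\eta)$; the signs of $p$ and $s$ are irrelevant because everything sits under an absolute value. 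The main obstacle is the second step: one must carefully carry out the two second-order directional derivatives and verify that every term other than $s^2\,\eta\eta f$, respectively $p^2\,\xi\xi f$, is either annihilated by restriction to $S(f)$ or is a multiple of $\xi f$. Once this ``parallel to $\xi f$'' property of the correction terms is in hand, the cancellation in the final step is automatic.
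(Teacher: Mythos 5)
Your argument is correct and follows essentially the same route as the paper: you write the new adapted pair as a pointwise linear combination of the old one, observe that the off-diagonal coefficients vanish on $S(f)$, compute that $\tilde\xi f$, $\tilde\xi\tilde\xi f$ and $\tilde\eta\tilde\eta f$ differ from $p\,\xi f$, $p^2\xi\xi f$ and $s^2\eta\eta f$ only by multiples of $\xi f$, and cancel the resulting factor $|p|^3s^2$ (the paper's $a,d$ are your $p,s$, and its coefficients $a\xi a$ and $d\eta c$ agree with your $\tilde\xi p$ and $\tilde\eta r$ on $S(f)$). The second-order expansions you flag as the remaining work are exactly the computations recorded in the paper's equation \eqref{eq:newvf1}, and they come out as you predict, so there is no gap.
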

\begin{proof}
Let us take another pair of adapted vector fields
$(\tilde \xi,\tilde \eta)$ such that
\begin{equation}
\label{eq:newvf}
\tilde \xi=a\xi+b\eta,\quad
\tilde \eta=c\xi+d\eta,
\end{equation}
where $a,b,c,d$ are smooth functions of $(u,v)$ satisfying
$ad-bc\ne0$, and on $S(f)$, satisfying $b=c=0$. Moreover, as $(\tilde \xi,\tilde
\eta)$ is positively oriented on $S(f)$, $ad>0$ holds on
$S(f)$.
Then we have
\begin{equation}
\label{eq:newvf1}
\begin{array}{rcll}
\tilde \xi f
&=&
a \xi f+b\eta f
\\
&=&
a \xi f&(\text{on}\ S(f)),
\\
\tilde \xi \tilde \xi f
&=&
a(\xi a \xi f + a \xi \xi f + \xi b \eta f + b \xi \eta f)\\
&&\hspace{30mm}
+b(\eta a \xi f + a \eta \xi f + \eta b \eta f + b \eta \eta f)\\
&=&
a\xi a\xi f+a^2 \xi\xi f&(\text{on}\ S(f)),
\\
\tilde \eta f
&=&
c\xi f+d\eta f
\\
\tilde \eta\tilde \eta f
&=&
c(\xi c \xi f+c  \xi\xi f+\xi d\ \eta f+d \xi\eta f)\\
&&\hspace{30mm}
+
d(\eta c \xi f+c  \eta\xi f+\eta d\ \eta f+d \eta\eta f)\\
&=&d\eta c\xi f+d^2\eta\eta f &(\text{on}\ S(f)).
\end{array}
\end{equation}
Then it holds that
%$$
%\tilde\lambda:=
%\det(\tilde \xi f,\tilde \eta f, \tilde \xi f\times\tilde \eta f
%/
%|\tilde \xi f\times\tilde \eta f|)
%=
%\det(a\xi f,d\eta f, ad \xi f\times \eta f/|ad\xi f\times \eta f|)
%=
%ad\lambda.
%$$
%Thus the sign of $\lambda$ does not changed.
%Furthermore,
%since $ad>0$, it holds that $\sgn(d)=\sgn(a)$.
%Thus
%$$
%\sgn(\tilde\eta\tilde \lambda)=
%\sgn(d\,\eta \lambda)
%=\sgn(a\,\eta \lambda)
%$$
%holds.
%
%On the other hand, since we have
$$
\frac{\det(\tilde\xi f,\tilde\eta\tilde\eta f, \tilde\xi\tilde\xi
f)} {|\tilde\xi f|^2|\tilde\xi f\times \tilde\eta\tilde\eta f|} =
\frac{a^3d^2\det(\xi f, \eta\eta f, \xi\xi f)} {|a^3||d^2||\xi
f|^2|\xi f\times \eta\eta f|} =\sgn(a)\frac{\det(\xi f, \eta\eta f,
\xi\xi f)} {|\xi f|^2|\xi f\times \eta\eta f|},
$$
and the lemma follows.
\end{proof}

Since the formula $\eqref{eq:kn}$ does not depend on the choice of pairs of
adapted vector fields, one can choose $\xi=\partial_u$,
$\eta=\partial_v$ and an adapted coordinate system, getting
\begin{equation} \label{eq:def:ku:vector}
{\kappa}_n(u,0) = \frac{1}{E} \frac{|\det (\,f_{u}\, ,f_{vv}\,
,f_{uu})|}{|f_{u}
 \times f_{vv}|} \,(u,0)\ .
\end{equation}

We can state properties of the umbilic curvature and singular
curvature in terms of the second fundamental form and also relate
the umbilic and normal curvatures.

%%%%%%%%%%%%%%%%%%%%%%%%%%%%%%%%%%%%%%%%%%%%%%%%%%%%%%%%%%%%%%%%%%%%%%%%%%%%%%%%%%%%%%%%%%%
\begin{theorem}\label{thm:ku}
Let $f: (\R^2, q) \to (\R^3, p)$ be a map-germ, \/ $q$ a cuspidal
edge and $\nu$ a unit normal vector field along $f$. Then the
following hold.
\begin{enumerate}
\renewcommand{\labelenumi}{{\rm (\alph{enumi})}}
\item
\label{item:a} $\nu(q)$ is orthogonal to the line $\ell$ which
contains $\Delta_{p}$\,. Therefore, 
it holds that $\displaystyle \kappa_u(q) =
\frac{|{\secff}_\nu (X,X)|}{\firff(X,X)}$, for any $X \in T_q\R^2$.

\item \label{item:normalumb} $\kappa_u(q)=\kappa_n(q)$.

\item
\label{item:c} $\kappa_s(q) = 0 $ if and only if \,$\secff (X,X)$ is
parallel to $\nu$ at $p$, where $X$ is a non-zero tangent vector to $S(f)$ at
$q$.

\item
\label{item:d} $\kappa_u(q) = \kappa_s(q) = 0$ if and only if
\,$\secff(X,X) = \zv$, where $X$ is a non-zero tangent vector to $S(f)$ at
$q$.
\end{enumerate}
\end{theorem}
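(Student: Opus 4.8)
The plan is to fix an adapted coordinate system $(u,v)$ so that $q=\zv$, the singular curve $S(f)$ is the $u$-axis, and $\partial_v$ is a null vector field along it. On the $u$-axis we then have $f_v=\zv$, so $T_pM=\image df_q$ is spanned by $f_u$ and, setting $n=\nu\times f_u/|f_u|$, the pair $\{\nu,\,n\}$ is an orthonormal basis of the normal plane $N_pM$. I would first record three facts valid on the $u$-axis: $F=G=0$ in the first fundamental form; $f_{uv}^\bot=\zv$ by Remark \ref{rem:adapted}; and, crucially, $\inner{\nu}{f_{vv}}=0$, obtained by differentiating the front identity $\inner{\nu}{f_v}=0$ in $v$ and using $f_v=\zv$ on the $u$-axis. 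Consequently the coefficients $m_\nu=n_\nu=0$ there, while $\nu\perp f_{vv}^\bot$.

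For (a) I would substitute $X=x\partial_u+y\partial_v$ into the curvature parabola: since $F=G=0$ one has $\firff(X,X)=x^2E$, and since $f_{uv}^\bot=\zv$ one has $\secff(X,X)=x^2 f_{uu}^\bot+y^2 f_{vv}^\bot$. Normalizing $x^2E=1$ and letting $y^2$ range over $[0,\infty)$ exhibits $\Delta_p$ as the half-line with base point $f_{uu}^\bot/E$ and direction $f_{vv}^\bot$; the relation $\inner{\nu}{f_{vv}}=0$ then says exactly that $\nu$ is orthogonal to the line $\ell$ carrying $\Delta_p$. Because $\{\nu,n\}$ is orthonormal and $\nu$ is orthogonal to the direction of $\ell$, the distance from $p$ to $\ell$ equals the absolute value of the $\nu$-coordinate of its base point, namely $|l_\nu|/E$. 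The same substitution gives $\secff_\nu(X,X)=x^2 l_\nu$ (as $m_\nu=n_\nu=0$), so the ratio $|\secff_\nu(X,X)|/\firff(X,X)=|l_\nu|/E$ is independent of $X$ and equals $\kappa_u(q)$.

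Part (b) is then immediate: in these coordinates $\hat\gamma(t)=f(t,0)$, so $\hat\gamma'=f_u$, $\hat\gamma''=f_{uu}$, and $\kappa_\nu=\inner{f_{uu}}{\nu}/|f_u|^2=l_\nu/E$, whence $\kappa_n=|\kappa_\nu|=|l_\nu|/E=\kappa_u$. For (c) and (d) I would use that a nonzero $X$ tangent to $S(f)$ is proportional to $\partial_u$, so $\secff(X,X)$ is proportional to $f_{uu}^\bot$, and expand it in the orthonormal basis as $f_{uu}^\bot=l_\nu\,\nu+\inner{f_{uu}}{n}\,n$. A short triple-product manipulation gives $\inner{f_{uu}}{n}=\det(f_u,f_{uu},\nu)/|f_u|$, which by \eqref{eq:ks:adap} vanishes exactly when $\kappa_s(q)=0$; thus $\secff(X,X)$ is parallel to $\nu$ iff its $n$-component dies iff $\kappa_s(q)=0$, proving (c). Finally $\kappa_u(q)=0$ iff the $\nu$-component $l_\nu$ vanishes, and $\kappa_s(q)=0$ iff the $n$-component vanishes, so both vanish simultaneously iff $f_{uu}^\bot=\secff(X,X)=\zv$, giving (d).

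The routine parts are the coordinate expansions; the one step that carries all the weight is the identity $\inner{\nu}{f_{vv}}=0$ on the singular curve, since it is what forces $\nu\perp\ell$ and collapses the curvature parabola into a half-line orthogonal to $\nu$. The only point I expect to require genuine care is justifying that the distance from $p$ to $\ell$ is read off correctly from the base point, i.e. that $\nu$ spans the orthogonal complement of the direction of $\ell$ inside $N_pM$; this uses both that $\nu\in N_pM$ (because $\nu\perp f_u$ and $f_v=\zv$) and that $\nu$ is a unit vector orthogonal to $f_{vv}^\bot$.
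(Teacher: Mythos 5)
Your proof is correct and follows essentially the same route as the paper's: an adapted coordinate system, the parametrization $\alpha(s)=\tfrac1E f_{uu}^\bot+s^2f_{vv}^\bot$ of $\Delta_p$, the key orthogonality $\inner{\nu}{f_{vv}}=0$ on $S(f)$, and the decomposition of $f_{uu}^\bot$ in the orthonormal frame $\{\nu,\ \nu\times f_u/|f_u|\}$ of $N_pM$ to read off (c) and (d). The only cosmetic differences are that you derive $\inner{\nu}{f_{vv}}=0$ by differentiating $\inner{\nu}{f_v}\equiv0$ rather than via the factorization $f_v=vh$ and $\nu\propto f_u\times h$, and you verify $\kappa_u=|l_\nu|/E$ and the formula $\kappa_u(q)=|\secff_\nu(X,X)|/\firff(X,X)$ directly instead of citing Remark 3.10(3) of \cite{mn}.
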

%%%%%%%%%%%%%%%%%%%%%%%%%%%%%%%%%%%%%%%%%%%%%%%%%%%%%%%%%%%%%%%%%%%%%%%%%%%%%%%%%%%%%%%%%%5
\begin{proof}
Let $M$ be the image of $f$. It was shown in \cite[Lemma 2.1]{mn}
that
  the second fundamental form  does not depend
on the choice of local system of coordinates on $(\R^2, \zv)$. So we
can take an
  adapted coordinate system $(u,v)$.

 Writing $\gamma(t) = (u(t),0)$ and $\hat{\gamma}(t) = f \circ
\gamma (t)$,
 then  $\hat{\gamma}'(t) = u'(t)f_u(u(t),0)$.
 Since $f_v (u(t),0) =\zv$ then
$X=x \partial_u + y \partial_v$ is a unit vector in
$T_{\gamma(t)}\R^2$ (with relation the pseudometric given by
$\firff$) if and only if $x= \pm {1/\sqrt{E(u(t),0)}}$. As
$f_{uv}(u(t),0) = \zv$, then a parametrization for
$\Delta_{\hat{\gamma}(t)}$ at $\hat{\gamma}(t)$ is
$$
\alpha (s) = \frac{1}{E(u(t),0)} f_{uu}^\bot(u(t),0)  +
s^2f_{vv}^\bot(u(t),0),
$$ and so
$\alpha' (s) =
 2sf_{vv}^\bot\,(u(t),0).$

 Then, to conclude (a) it
is enough to verify that $\nu(q)$ is orthogonal to $f_{vv}^\bot(q)$.
 On $S(f) = \{(u,v); v =0\}$ it also holds that $f_v =  \zv$
and by Remark \ref{rem:adapted}, $f_{vv} \neq \zv$ holds.
Therefore we can write $f_v = v h$,
where $h(u,v) \neq \zv$ on $(\R^2,\zv)$, which implies that
 \begin{equation}\label{eq:nu:Sf}
 \displaystyle \nu \, = \, \varepsilon \,
\frac{f_{u}\times h}{|f_{u}\times h|} \, = \, \varepsilon \,
\frac{f_{u}\times f_{vv}}{|f_{u}\times f_{vv}|} \, = \,\varepsilon
\, \frac{f_{u}\times f_{vv}^\bot}{|f_{u}\times f_{vv}|}
\end{equation}
 on
the singular set, where $\varepsilon=1$ or $-1$, and  therefore
$\nu$ is orthogonal to $f_{vv}^\bot$ on $S(f)$, as we claimed.
So, it follows from Remark 3.10(3) of \cite{mn} that
$$\displaystyle \kappa_u(q) = \frac{|{\secff}_\nu
(X,X)|}{\firff(X,X)}$$ holds for any $X \in T_q\R^2$.

%Moreover,
%we can show that $\sgn \varepsilon = \sgn (\lambda_v)$. In fact, it
%is enough to notice that
%$$\lambda = \det(f_u,f_v,\nu) = \varepsilon v \det(f_u, h, \frac{f_u \times h}{|f_u \times h|})$$
% on $S(f)$.
%

 Denoting $\kappa_u(t)$ by $\kappa_u(u,0)$, it holds from
(\ref{eq:def:ku}) that
$$
\kappa_u(u,0) = \frac{ |\det (\frac{1}{E} f_{uu}^\bot +
s^2f_{vv}^\bot,\
       2sf_{vv}^\bot,\
       f_u)|}
{|2sf_{u}^\bot \times f_{vv}|}(u,0) = \frac{|\det (
f_{uu},\
       2sf_{vv},\
       f_u)|}
{E|2sf_{u}\times f_{vv}|}(u,0).
$$
Therefore, from \eqref{eq:def:ku:vector}, we get that
$$
\kappa_u(u,0) = {\kappa}_n(u,0),
$$
concluding (b).

Hence,
\begin{equation} \label{eq:ku-normal}
\begin{array}{l}
\displaystyle \kappa_u(u,0)
=  \frac{|\det (\,f_{u}\,
,f_{vv}\, ,f_{uu})|} {E|f_u\times f_{vv}|} \ = \ \frac{1}{E} \left|
\inner{ \frac{f_{u}\times f_{vv}}{|f_u\times f_{vv}|}
}{f_{uu}}\right| =
\frac{|\inner{ \nu }{f_{uu}}|}{E}\, =
 \,  \frac{|\inner{ \nu }{f_{uu}^\bot}|}{E} ,%\, = \,
%\varepsilon \sgn(\lambda_v)\, \frac{1}{E}\inner{ \nu }{\secff
%(\partial_u,
%\partial_u)}\,,
\end{array}
\end{equation}
at $q=(u,0)$.  %%Consequently, given $X= x
%%\partial_u + y
%%\partial_v \in T_q\R^2$ and writing $\nu(q) = \nu$, since
%%%
%%$$ {\secff}_\nu (X,X) = \langle {\secff}(X,X), \nu \rangle = \langle x^2 f_{uu}^\bot + 2xy f_{uv}^\bot + y^2
%%f_{vv}^\bot\,, \nu \rangle \, = \, x^2 \langle f_{uu}^\bot\,, \nu
%%\rangle %\, = \, x^2 \langle f_{uu}\,, \nu \rangle
%%$$
%%%
%%at $q$, and $\firff(X,X) = x^2E(q)$, from (\ref{eq:ku-normal}) we
%%get (c):
%%%
%% \begin{equation}
%% \begin{array}{l} \displaystyle \kappa_u(q) =
%%\frac{|\inner{ \nu }{f_{uu}^\bot(q)}|}{E(q)}\, %=
%%% \varepsilon
%%%\sgn(\lambda_v) \, \frac{1}{E} \langle {\secff}(X,X), \nu \rangle\,
%%%\\ \hspace{10mm} =
%% =  \displaystyle   \frac{|
%% {\secff}_\nu(X,X)|}{x^2E(q)} =
%% \frac{|{\secff}_\nu(X,X)|}{\firff(X,X)},
%%% =
%%% \frac{|\inner{f_{uu}}{\nu}|}{\inner{f_u}{f_u}}(q),
%%\ \ X \in T_q\R^2.
%%  \end{array}
%%  \end{equation}
%%  %

 Now, consider the orthonormal frame
$\{\nu(q), \nu(q) \times f_u(q)/|f_u(q)|\}$ for $N_pM$. Noticing
that $\det(f_u, f_{uu}, \nu) = \det (f_u, f_{uu}^\bot, \nu)$ at $q$,
then, from (\ref{eq:ks:adap}) and (\ref{eq:ku-normal}), it holds
that
$$\frac{1}{E}\secff (\partial_u, \partial_u) \,=\, \frac{1}{E}f_{uu}^\bot \, = \,  \kappa_u \, \nu \, + \,
\sgn(\lambda_v) \kappa_s \ \nu \times \frac{f_u}{|f_u|}\,,$$
at $q$, which implies that $\left|\secff (\partial_u,
\partial_u)/{E(q)}\right|^2 = \kappa_u^2(q) + \kappa_s^2(q)$,
and consequently we conclude (c) and (d) of the theorem.
\end{proof}
\medskip

A usual approach for getting information about the geometry of
surfaces is analyzing their generic contacts with planes and
spheres. Such contacts are measured by composing the implicit
equation of the plane or sphere with the parametrisation of the
surface, and seeing what types of singularities arise. Then we label
the contact according with the type of singularity.  In \cite{mn}
J. J. Nu\~no-Ballesteros and the first author deal with such study
for surfaces in $\R^3$ with corank 1 singularities.  We recall that
a singular point of a function is said to be of type $\Sigma^{2,2}$
if all of the partial derivatives of the function up to order 2 at the
singular point are equal to zero.  With the conditions of 
Theorem \ref{thm:ku} and by \cite{mn}, 
it follows that: (a) If $\kappa_n(q) = 0$,
then the plane at $p$ orthogonal to $\nu(q)$ is the only plane in
$\R^3$ having contact of type $\Sigma^{2,2}$ with $f$.
(b) If $\kappa_n(q) \neq 0$, then 
the sphere with
center at 
$$u = p +  \varepsilon
\frac{1}{\kappa_n(q)} \,\nu(q)$$
is the only sphere in $\R^3$ having
contact of type $\Sigma^{2,2}$ with $f$, where
$\varepsilon = \sgn(\secff_\nu(X,X))$, for any unit vector $X \in
T_q\R^2$.
 %We remark that we are taking the
%\emph{signed} umbilic curvature i.e., the limiting normal curvature.
%Indeed, it is enough to note that since $\nu(q)$ is orthogonal to
%the line containing $\Delta_p$, $\nu(q)$ is an infinite binormal
%direction at $p$ (see \cite{mn}) and $\kappa_u(q)$ is the length of
%the projection of $\Delta_p$ in the direction given by $\nu(q)$ with
%the sign given by ${\secff}_\nu (X,X)$, for any $X \in T_q\R^2$.
%%%%%%%%%%%%%%%%%%%%%%%%%%%%%%%%%%%%%%%%%%%%%%%%%%%%%%%%%%%%%%%%%%%%%%%%%%%%%%%%%%%%%

\medskip

When $f$ is of normal form,
the relations between the singular curvature,
the limiting normal curvature
(so, the umbilic curvature)
and curvature and torsion of the space curve
$f|_{S(f)}$ are given in the following result.
%%%%%%%%%%%%%%%%%%%%%%%%%%%%%%%%%%%%%%%%%%%%%%%%%%%%%%%%%%%%%%%%%%%%%%%%%%%%%%%%%%%%
\begin{theorem}
\label{thm:rel1} Let $f(u,v)$ be a map-germ of the form
\eqref{eq:west3}. For the space curve $f|_{S(f)}$ at the origin, it
holds that
$$
\begin{array}{l}
\kappa_s=a_{20},\quad
\kappa_s'=a_{30}+b_{12} b_{20},\quad
\kappa_n=\kappa_u=b_{20},\quad
\kappa_n'=b_{30}-a_{20} b_{12},
\\
\kappa=\sqrt{a_{20}^2+b_{20}^2},\quad
\kappa'=\dfrac{a_{20} a_{30} + b_{20} b_{30}}
{\sqrt{a_{20}^2 + b_{20}^2}},\quad
\tau=\dfrac{a_{20}a_{30}-b_{20}a_{30}}{a_{20}^2+b_{20}^2}.
\end{array}
$$
\end{theorem}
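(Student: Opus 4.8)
The plan is to compute everything directly from the explicit normal form \eqref{eq:west3}, using that $(u,v)$ is an adapted coordinate system with $S(f)=\{v=0\}$, so that the singular space curve is $\hat\gamma(u)=f(u,0)$. Every quantity in the statement is evaluated at the origin, so it suffices to expand each relevant function of $u$ to first order in $u$: the constant term yields the invariant and the coefficient of $u$ yields its derivative. First I would record from \eqref{eq:west3} the expansions along $v=0$, observing that the remainder $h$ contributes only at order $u^2$ or higher and hence plays no role: $f_u(u,0)=(1,a_{20}u,b_{20}u)+O(u^2)$, $f_{uu}(u,0)=(0,a_{20}+a_{30}u,\,b_{20}+b_{30}u)+O(u^2)$ and $f_{vv}(u,0)=(0,1,b_{12}u)+O(u^2)$.

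Next I would construct the unit normal along $S(f)$ from \eqref{eq:nu:Sf}, writing $\nu=(f_u\times f_{vv})/|f_u\times f_{vv}|$. A direct cross product gives $f_u\times f_{vv}=(-b_{20}u,\,-b_{12}u,\,1)+O(u^2)$, and since $|f_u\times f_{vv}|=1+O(u^2)$ the normalization leaves the linear terms unchanged; in particular $\nu(0)=(0,0,1)$. With these expansions, $\kappa_s$ follows by substituting into \eqref{eq:ks:adap} and $\kappa_n=\kappa_u$ (the equality being part (b) of Theorem \ref{thm:ku}) by substituting into the form $\kappa_n=|\inner{\nu}{f_{uu}}|/E$ extracted from \eqref{eq:ku-normal}. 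Here one checks that $\sgn(\lambda_v)=+1$ at the origin, since $\lambda_v=\det(f_u,f_{vv},\nu)=1$ there, and that $b_{20}\ge0$ by the normal-form convention. Evaluating the determinant $\det(f_u,f_{uu},\nu)$ and the inner product $\inner{\nu}{f_{uu}}$ to first order gives $\kappa_s(u,0)=a_{20}+(a_{30}+b_{12}b_{20})u+O(u^2)$ and $\inner{\nu}{f_{uu}}/E=b_{20}+(b_{30}-a_{20}b_{12})u+O(u^2)$, from which the four entries $\kappa_s,\ \kappa_s',\ \kappa_n=\kappa_u,\ \kappa_n'$ are read off.

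For the curvature and torsion of $\hat\gamma$ as a space curve I would apply the standard formulas $\kappa=|\hat\gamma'\times\hat\gamma''|/|\hat\gamma'|^3$ and $\tau=\det(\hat\gamma',\hat\gamma'',\hat\gamma''')/|\hat\gamma'\times\hat\gamma''|^2$. Since $\hat\gamma^{(k)}(u)=\partial_u^k f(u,0)$, the expansions above give $\hat\gamma'(0)=(1,0,0)$, $\hat\gamma''(0)=(0,a_{20},b_{20})$ and $\hat\gamma'''(0)=(0,a_{30},b_{30})$; thus $\hat\gamma'\times\hat\gamma''=(0,-b_{20},a_{20})$ at the origin, giving $\kappa=\sqrt{a_{20}^2+b_{20}^2}$, and the triple product gives $\tau=(a_{20}b_{30}-b_{20}a_{30})/(a_{20}^2+b_{20}^2)$. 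For $\kappa'$ I would expand $|\hat\gamma'\times\hat\gamma''|^2=(a_{20}^2+b_{20}^2)+2(a_{20}a_{30}+b_{20}b_{30})u+O(u^2)$ — noting that the first coordinate of $\hat\gamma'\times\hat\gamma''$ is only $O(u^2)$ and so does not affect the linear term — together with $|\hat\gamma'|=1+O(u^2)$, then take the square root and differentiate to obtain $\kappa'=(a_{20}a_{30}+b_{20}b_{30})/\sqrt{a_{20}^2+b_{20}^2}$.

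The computations are routine once the expansions are in place; the one genuinely delicate point is that $\kappa_s'$ and $\kappa_n'$ require the linear-in-$u$ part of $\nu$ (the values alone use only $\nu(0)=(0,0,1)$), and it is exactly this term — carrying the coefficient $b_{12}$ through $f_{vv}$ — that produces the mixed terms $b_{12}b_{20}$ in $\kappa_s'$ and $-a_{20}b_{12}$ in $\kappa_n'$. A secondary issue is the absolute value in $\kappa_n$: to avoid any non-smoothness when $b_{20}=0$ I would differentiate the signed normal curvature $\kappa_\nu=\inner{f_{uu}}{\nu}/E$, which is smooth, and invoke $b_{20}\ge0$ to identify $\kappa_n=\kappa_\nu$ near the origin. (I note in passing that the computation yields the torsion numerator $a_{20}b_{30}-b_{20}a_{30}$, so the first term of the numerator in the stated formula should read $a_{20}b_{30}$.)
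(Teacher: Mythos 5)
Your proposal is correct and follows essentially the same route as the paper's own proof: a direct expansion of the normal form along $v=0$ in an adapted coordinate system, feeding $f_u$, $f_{uu}$, $f_{vv}$ and $\nu$ into \eqref{eq:ks:adap}, \eqref{eq:ku-normal} and the standard Frenet formulas (the paper merely omits the first-order terms and the $\kappa$, $\kappa'$, $\tau$ computations that you carry out explicitly). You are also right that the torsion numerator should read $a_{20}b_{30}-b_{20}a_{30}$; the statement as printed contains a typo, as confirmed by the formula for $\tau$ in Section \ref{sec:main:th}.
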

%%%%%%%%%%%%%%%%%%%%%%%%%%%%%%%%%%%%%%%%%%%%%%%%%%%%%%%%%%%%%%%%%%%%%%%%%%%%%%%%%%%%%
\begin{proof}
The curvature and the torsion of the curve $\hat{\gamma} = f \circ
\gamma$ are calculated as  usual  and we shall omit that here. Using
the parametrization $f(u,v)$ given by  \eqref{eq:west}, we have:
$$ f_u = \big(1, a_1', b_2' + v^2 b'_3+ v^3 (b_4)_u\big) \text{\ and\ }
f_v = \big(0, v, 2vb_3 + 3v^2 b_4+ v^3 (b_4)_v\big),
$$
 where $a_1'= {da_1}/{du}$ and $b_i'= {db_i}/{du}$  for $i=2,3$.
  This implies that the $u$-axis is the singular
 curve and
 the $v$-direction is the null direction. So $(u,v)$ is an adapted
 system of coordinates. Since $f_u(\zv)=(1,0,0)$,
 $f_{uu}(\zv) = (0, a_1''(0), b_2''(0))$,  $f_{vv}(\zv) = (0, 1,0)$,
 a unit normal vector at $\zv$ is $\nu = (0,0,1)$ and the signed area
 density satisfies $\lambda_v(\zv) = 1$. Then,  using \eqref{eq:ks:adap} and \eqref{eq:ku-normal}, we get
 $\kappa_s(\zv) = a_1''(0) = a_{20}$ and $\kappa_n(\zv) =b_2''(0) =
 b_{20}.$
\end{proof}

%%%%%%%%%%%%%%%%%%%%%%%%%%%%%%%%%%%%%%%%%%%%%%%%%%%%%%%%%%%%%%%%%%%%%%%%%%%%%%%%%%%%%%%%%
As a consequence, we have the following corollary.
\begin{corollary}\label{cor:rel:ku:ks}
Let $f: (\R^2, \zv) \rightarrow (\R^3, \zv)$ be a map-germ, and
$\zv$ a cuspidal edge, $\gamma(t)$ a parametrization of $S(f)$ and
$\hat\gamma(t)=f\circ\gamma(t)$. Let $\kappa(t)$ be the curvature of
$\hat\gamma(t)$ as a curve in $\R^3$, $\kappa_s(t)$ its singular
curvature and $\kappa_n(t)$ its limiting normal curvature. Then
$$
\kappa(t)^2=\kappa_s(t)^2+\kappa_n(t)^2.
$$
\end{corollary}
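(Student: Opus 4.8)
The plan is to read the identity off directly from Theorem~\ref{thm:rel1}. That theorem already records, at the base point of the normal form, the three quantities
$$
\kappa_s=a_{20},\qquad \kappa_n=b_{20},\qquad \kappa=\sqrt{a_{20}^2+b_{20}^2}.
$$
Squaring the last one gives $\kappa^2=a_{20}^2+b_{20}^2=\kappa_s^2+\kappa_n^2$, which is the asserted relation at the origin. So at the level of a single cuspidal-edge germ the corollary is purely algebraic, with no further computation needed.

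The one point that needs care is that the corollary is stated for every value of the parameter $t$, whereas Theorem~\ref{thm:rel1} is phrased at the origin. First I would note that $\kappa$, $\kappa_s$ and $\kappa_n$ are all geometric invariants attached pointwise to the singular curve: $\kappa(t)$ is the curvature of the space curve $\hat\gamma$, while $\kappa_s(t)$ and $\kappa_n(t)$ are defined in \eqref{eq:def:ks} independently of parametrization and orientation, and (for $\kappa_n$) up to the sign removed by taking absolute values. Since every point of $S(f)$ is by hypothesis a cuspidal edge, at an arbitrary parameter $t_0$ I would center a normal form \eqref{eq:west3} at $\hat\gamma(t_0)$ and apply Theorem~\ref{thm:rel1} there; this yields $\kappa(t_0)^2=\kappa_s(t_0)^2+\kappa_n(t_0)^2$, and, $t_0$ being arbitrary, the relation holds identically in $t$.

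Alternatively, and avoiding any re-centering of the normal form, I would derive the relation directly from the computation already carried out in the proof of Theorem~\ref{thm:ku}. There, in an adapted coordinate system, decomposing the normal part of $f_{uu}$ in the orthonormal frame $\{\nu,\ \nu\times f_u/|f_u|\}$ of $N_pM$ gives
$$
\left|\frac{\secff(\partial_u,\partial_u)}{E}\right|^2=\kappa_u^2+\kappa_s^2,
$$
and Theorem~\ref{thm:ku}(b) lets me replace $\kappa_u$ by $\kappa_n$. It then remains to identify the left-hand side with $\kappa^2$. Parametrizing $\hat\gamma$ by $u$ so that $\hat\gamma'=f_u$ and $\hat\gamma''=f_{uu}$ on $S(f)$, the space-curve formula gives $\kappa=|f_u\times f_{uu}|/|f_u|^3$; since the tangential part of $f_{uu}$ is a multiple of $f_u$ it drops out of the cross product, so $f_u\times f_{uu}=f_u\times\secff(\partial_u,\partial_u)$, and because $f_u\in T_pM$ is orthogonal to $\secff(\partial_u,\partial_u)\in N_pM$ this vector has length $|f_u|\,|\secff(\partial_u,\partial_u)|$. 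Hence $\kappa=|\secff(\partial_u,\partial_u)|/E$, which is exactly the left-hand side above.

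The main obstacle is essentially bookkeeping rather than conceptual. In the direct route one must be sure that the tangential component of $f_{uu}$ genuinely contributes nothing to the curvature, and that the orthogonality $T_pM\perp N_pM$ makes the relevant cross product split as a product of norms; in the normal-form route one must justify that centering \eqref{eq:west3} at an arbitrary singular point is legitimate and that the three invariants there coincide with the intrinsic quantities in the statement. Neither step is difficult, but I expect the direct route to be cleaner, since it produces the identity at every $t$ at once without invoking the normal form pointwise.
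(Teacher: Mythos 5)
Your first argument is exactly the paper's proof: the corollary is stated as an immediate consequence of Theorem~\ref{thm:rel1}, squaring $\kappa=\sqrt{a_{20}^2+b_{20}^2}$ and using $\kappa_s=a_{20}$, $\kappa_n=b_{20}$, with the validity at an arbitrary parameter $t$ obtained by centering the normal form at any chosen point of the singular curve. Your alternative derivation, identifying $\kappa$ with $|\secff(\partial_u,\partial_u)|/E$ and invoking the decomposition of $\secff(\partial_u,\partial_u)/E$ in the frame $\{\nu,\ \nu\times f_u/|f_u|\}$ from the proof of Theorem~\ref{thm:ku}, is also correct and is essentially already implicit there.
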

%%%%%%%%%%%%%%%%%%%%%%%%%%%%%%%%%%%%%%%%%%%%%%%%%%%%%%%%%%%%%%
%\begin{proof}

%Let $f:(\R^2,0) \rightarrow (\R^3,0)$ be a
%cuspidal edge. So, by smooth changes
%of coordinates in the source and isometries in the target we can
%suppose that $f$ is the normal form given in Equation
%\ref{eq:west}.
%\end{proof}

%%%%%%%%%%%%%%%%%%%%%%%%%%%%%%%%%%%%%%%%%%%%%%%%%%%%%%%%%%%%%%%%%%%%%%%%%%%%
%%%%%%%%%%%%%%%%%%%%%%%%%%%%%%%%%%%%%%%%%%%%%%%%%%%%%%%%%%%%%%%%%%%%%%%%%%%%
%%%%%%%%%%%%%%%%%%%%%%%%%%%%%%%%%%%%%%%%%%%%%%%%%%%%%%%%%%%%%%%%%%%%%%%%%%%%
\section{Other geometric invariants up to order three}
\label{sec:invar} Comparing \eqref{eq:west3} and Theorem
\ref{thm:rel1}, there are three other independent invariants of
cuspidal edges up to order three.

%%%%%%%%%%%%%%%%%%%%%%%%%%%%%%%%%%%%%%%%%%%%%%%%%%%%%%%%%%%%%%%%%%%%%%%%%%%%
%%%%%%%%%%%%%%%%%%%%%%%%%%%%%%%%%%%%%%%%%%%%%%%%%%%%%%%%%%%%%%%%%%%%%%%%%%%%
\subsection{Cuspidal curvature}

\mycomment{
The curvature of a plane cusp (a map-germ $(\R,0)\to(\R^2,\zv)$ which is
${\mathcal A}$-equivalent to $t\mapsto(t^2,t^3)$) is defined in
\cite{suyojm}. It measures the wideness of the cusp. Let
$c:(\R,0)\to(\R^2,\zv)$ be a  front  and $\nu(t)$ a unit normal
vector. Then  $c$  is a cusp if and only if $c'(0)=0$ and
$\det(c''(0),c'''(0))\ne 0$. In this case, the \emph{cuspidal
curvature $\mu$ at $0$ of $c$} is defined by
\begin{equation}
\label{eq:generalcusp} \mu (0) = \left.\frac{\det(c''(t),c'''(t))}
{|c''(t)|^{5/2}}\right|_{t=0} = \left.
2\frac{\det(\nu(t),\nu'(t))}{\sqrt{\left|
\det(c''(t),\nu(t))\right|}} \right|_{t=0}.
\end{equation}

If $\mu>0$ (resp. $\mu<0$), the cusp is turning to the left (resp.
right). A cycloid is the plane curve given by $
\check{c}(t)=a(t-\sin t,1-\cos t) $, where $a>0$ is called the
radius of $\check{c}$. It has a planar cusp at $t=0$. The square of
the absolute value of the cuspidal curvature of a cusp curve at
$t=0$ coincides  with the inverse the radius of the cycloid. See
\cite{suyojm} for details.

Let us turn to considering cuspidal edges. Let
$f:(\R^2,\zv)\to(\R^3,\zv)$ be a map-germ, and $\zv$ a
cuspidal edge and 
$M = {\rm Im}(f)$. 
Then we can consider an orthogonal plane $\Pi_p$ to $T_pM$
at $p= f(q)$, where $q \in S(f)$. Since $\zv$ is a cuspidal
edge, the intersection curve $M \cap \Pi_p$ is a planar cusp
$f_{\Pi_p}(t)$, with $f_{\Pi_p}(0)=f(q)$. Thus we can consider the
cuspidal curvature of $f_{\Pi_p}(t)$, where the orientation of
$\Pi_p$ is taken such that $\{\Pi_p,f_u(q)\}$ gives a positive
orientation of $\R^3$. We define the {\em cuspidal curvature}\/
$\kappa_c$ of the cuspidal edge $q$ as the  cuspidal
curvature of the planar cusp $f_{\Pi_p}(t)$ at $0$. } The {\em
cuspidal curvature\/} $\kappa_c$ for cuspidal edges is defined in
\cite{msuy} as
% We now give a
%formula for the cuspidal curvature of cuspidal edges using adapted
%vector fields. Let $(\xi, \eta)$ be a pair of adapted vector fields
% on $(\R^2, \zv)$. It can be easily seen that
%the following equation holds
$$
\kappa_c (u,v)=\frac{|\xi f|^{3/2} \det(\xi f,\ \eta\eta f,\
\eta\eta\eta f)} {|\xi f\times\eta\eta f|^{5/2}}\, (u,v)\, , \ \
(u,v) \in S(f),
$$
where $(\xi,\eta)$ is an adapted pair of vector fields on
$(\R^2,\zv)$. If $f(u,v)$ is a map-germ of the form
\eqref{eq:west3}, then it holds that
$$
\kappa_c(0,0)=b_{03}.
$$
See \cite{msuy} for detailed description and
geometric meanings of it.

\mycomment{
Then we have the following proposition.
\begin{proposition}\label{prop:cr}
The above formula of $\kappa_c$ does
not depend on the choice of the pair of
adapted vector fields on $(\R^2, \zv)$.
\end{proposition}
For the proof of this proposition we will need of the following
result:
\begin{lemma}
 \label{lem:cr} Let
$f:(\R^2,\zv)\to(\R^3,\zv)$ be a map-germ and $\zv$ a
cuspidal edge, $(\xi, \eta)$ an
adapted pair of vector fields on $(\R^2, \zv)$ and \/ $q \in S(f)$.
Then
 $$\det(\xi f,\ \xi\eta f,\ \eta\eta f)(q)=
 \det(\xi f,\ \eta\xi f,\ \eta\eta f)(q)=0.$$
\end{lemma}
\begin{proof}
Since $\xi$ and $\eta$ are linearly independent,
the vector field $\xi\eta-\eta\xi$ can be
written as a linear combination of $\xi$ and $\eta$.
Furthermore, $\eta f(q)=\zv$ holds, so
$(\xi\eta-\eta\xi)f(q)$ is parallel to $\xi f(q)$.
Thus $\det(\xi f,\ \xi\eta f,\ \eta\eta f)(q)=
\det(\xi f,\ \eta\xi f,\ \eta\eta f)(q)=0$ holds.
Since $\xi \eta f = 0$ on $S(f)$, this completes the proof.
%if $\rank df(q)=1$ and
% $\det(\xi f,\ \xi\eta f,\ \eta\eta f)(p)\ne0$ holds, then
% $f$ at $q$ is not a cuspidal edge
% but a cross cap \cite[p.161 (b)]{whitney}, what concludes the proof.
\end{proof}
\medskip

\begin{proof}[Proof of Proposition \ref{prop:cr}.]
Let us take another
pair of adapted vector fields as in \eqref{eq:newvf}.
Set
$
A=\xi c \xi f+c  \xi\xi f+\xi d\ \eta f+d \xi\eta f
$
and
$
B=\eta c \xi f+c  \eta\xi f+\eta d\ \eta f+d \eta\eta f
$,
and
then $\tilde\eta\tilde\eta f=cA+dB$ holds.
Then we have
\begin{equation}
\label{eq:etab}
\eta B=
 \eta\eta c\,\xi f
+2\eta c\,\eta\xi f+c\,\eta\eta\xi f
+\eta\eta d\,\eta f
+2\eta d\,\eta\eta f
+d\eta\eta\eta f.
\end{equation}
Thus, on $S(f)$,
\begin{equation}
\label{eq:newvf2}
\begin{array}{rcll}
\tilde\eta\tilde\eta\tilde\eta f
&=&
c*
+
d\big(\eta c\,A+c\,\eta A+\eta d\,B+d\,\eta B\big)\\
&=&
d\Big(
\eta c(\xi c\xi f)
+\eta d\big(\eta c\xi f+d\eta\eta f\big)
\\
&&\hspace{20mm}
+d\big(\eta\eta c\xi f+2\eta c\eta\xi f
+2\eta d\eta\eta f+d\eta\eta\eta f\big)
\Big)
\end{array}
\end{equation}
holds, where $*$ is a function (note that  since $\eta f=0$
on $S(f)$, then $\xi\eta f=0$, and we also used Lemma \ref{lem:cr}).
So it follows that
$$
\det(\tilde\xi f,\ \tilde\eta\tilde\eta f,\
\tilde\eta \tilde\eta \tilde\eta f)
=
ad^5\det(\xi f,\ \eta\eta f,\ \eta\eta\eta f).
$$
Hence,
$$
\frac{\det(\tilde\xi f,\ \tilde\eta\tilde\eta f,
\ \tilde\eta\tilde\eta\tilde\eta f)}
{|\tilde\xi f||\tilde\eta\tilde\eta f|^{5/2}}
=
\frac{ad^5\det(\xi f,\ \eta\eta f,\ \eta\eta\eta f)}
{|a||d^5||\xi f||\eta\eta f|^{5/2}}
=
\frac{\det(\xi f,\ \eta\eta f,\ \eta\eta\eta f)}
{|\xi f||\eta\eta f|^{5/2}}.
$$
Here, note that $ad>0$.
\end{proof}

In the following equation we give
the expression of the cuspidal curvature for an adapted coordinate
system:
$$
\kappa_c(u,0)= \frac{\det(f_u,\, f_{vv},\, f_{vvv})}
{|f_u||f_{vv}|^{5/2}}\, (u,0)\,.
$$

By this formula and a direct calculation,
we have the following result which
gives the cuspidal curvature for the normal form
of cuspidal edges.

%%%%%%%%%%%%%%%%%%%%%%%%%%%%%%%%%%%
%%%%%%%%%%%%%%%%%%%%%%%%%%%%%%%%%%%
\begin{proposition}
Let $f$ be a map-germ of the form \eqref{eq:west3}. Then
$\kappa_c(\zv) = b_{03}$ holds.
%%the cuspidal curvature $\mu$ of $f_{\Pi_p}(t)$ at $t=0$ is
%%$ b_{03}. $
\end{proposition}
}
%%%%%%%%%%%%%%%%%%%%%%%%%%%%%%%%%%%%
%%%%%%%%%%%%%%%%%%%%%%%%%%%%%%%%%%%%
%\begin{proof}
%Since $f$ is of the form \eqref{eq:west3}, $\Pi_{\zv} =
%\{(0,y,z)\,|\,y,z \in \R\}$. So it holds that $c(t) = (\frac{1}{2}
%t^2, \frac{b_{03}}{6}t^3) + o(3)$, which gives $\kappa_c(\zv) =
%b_{03}$.
%\end{proof}

\medskip

\subsection{Cusp-directional torsion}
Let $f=(f_1,f_2,f_3):(\R^2,\zv)\to(\R^3,\zv)$ be 
a map-germ, $\zv$ a
cuspidal edge, and $\gamma(t)$ a parametrization of $S(f)$. Take a
pair of adapted vector fields $(\xi,\eta)$  on $(\R^2, \zv)$.
% satisfying $\inner{\xi
%f}{\eta\eta f}=0$ on $S(f)$. We remark that a pair of adapted vector
%fields satisfying this condition always exists. In fact, let us take
%an adapted coordinate system $(u,v)$ and consider the pair of
%adapted vector fields
%$$
%\xi(u,v)=\partial_u,\quad
%\eta(u,v)=v\bar{c}(u,v)\partial_u+\partial_v.
%$$
%Then, since
%$\inner{\xi f}{\eta\eta f}=\bar{c}\inner{f_u}{f_u}+\inner{f_u}{f_{vv}}$
%holds on $S(f)$, we have the desired pair by setting
%$\bar{c}=-\inner{f_u}{f_{vv}}/\inner{f_u}{f_u}$.
We define the \emph{cusp-directional torsion} on singular points
consisting of cuspidal edges as follows:
\begin{equation}
\label{eq:wobs}
\kappa_t(u,v)=
 \frac{\det(\xi f,\,\eta\eta f,\,\xi\eta\eta f)} {|\xi
f\times\eta\eta f|^2}(u,v) - \frac{\det(\xi f,\,\eta\eta f,\,\xi\xi
f) \inner{\xi f}{\eta\eta f}} {|\xi f|^2|\xi f\times\eta\eta f|^2}
(u,v),\ (u,v)\in S(f).
\end{equation}
By Remark \ref{rem:adapted}, the denominator of $\kappa_t(u,v)$ does
not vanish and therefore $\kappa_t$ is a bounded function on
cuspidal edges. The following proposition shows that the cusp-directional
torsion is well defined.
\begin{proposition}
\label{prop:inv}
The definition of cusp-directional torsion does not
depend on the choice of the pair\/ $(\xi, \eta)$ of adapted vector
fields on\/ $(\R^2, \zv)$.
\end{proposition}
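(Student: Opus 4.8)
The plan is to follow the scheme already used to prove that the formula \eqref{eq:kn} for $\kappa_n$ is independent of the adapted frame: take an arbitrary second adapted pair as in \eqref{eq:newvf}, $\tilde\xi = a\xi + b\eta$ and $\tilde\eta = c\xi + d\eta$ with $ad-bc\ne 0$, $b=c=0$ on $S(f)$ and $ad>0$ on $S(f)$, substitute into \eqref{eq:wobs}, and verify that the two fractions recombine to the original value on $S(f)$. The restrictions of $\tilde\xi f$, $\tilde\xi\tilde\xi f$ and $\tilde\eta\tilde\eta f$ to $S(f)$ are already available from \eqref{eq:newvf1}; in particular, on $S(f)$ one has $\tilde\xi f = a\,\xi f$, $\tilde\eta\tilde\eta f = d(\eta c)\,\xi f + d^2\,\eta\eta f$ and $\tilde\xi\tilde\xi f = a(\xi a)\,\xi f + a^2\,\xi\xi f$. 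The only genuinely new quantity is the third-order term $\tilde\xi\tilde\eta\tilde\eta f$ on $S(f)$.

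For that term I would avoid the brute-force expansion of all third derivatives and instead use that $\xi$ is tangent to $S(f)$ along $S(f)$. Since $b=0$ on $S(f)$, we have $\tilde\xi\tilde\eta\tilde\eta f = a\,\xi(\tilde\eta\tilde\eta f)$ there; and because $\xi$ is tangent to $S(f)$, the $\xi$-derivative on $S(f)$ depends only on the restriction of $\tilde\eta\tilde\eta f$ to $S(f)$, which \eqref{eq:newvf1} already provides. Differentiating that restriction gives, on $S(f)$,
\[
\tilde\xi\tilde\eta\tilde\eta f = a\,\xi\big(d(\eta c)\big)\,\xi f + a\,d(\eta c)\,\xi\xi f + 2a\,d(\xi d)\,\eta\eta f + a\,d^2\,\xi\eta\eta f,
\]
a combination of $\xi f$, $\xi\xi f$, $\eta\eta f$ and $\xi\eta\eta f$.

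Substituting into \eqref{eq:wobs} and using $\det(\tilde\xi f,\tilde\eta\tilde\eta f,\,\cdot\,)=ad^2\det(\xi f,\eta\eta f,\,\cdot\,)$ together with $|\tilde\xi f\times\tilde\eta\tilde\eta f|^2 = a^2 d^4|\xi f\times\eta\eta f|^2$, the $\xi f$- and $\eta\eta f$-columns of $\tilde\xi\tilde\eta\tilde\eta f$ are annihilated by the determinant, so the first fraction equals the original first fraction plus the single non-invariant summand $\dfrac{(\eta c)\det(\xi f,\eta\eta f,\xi\xi f)}{d\,|\xi f\times\eta\eta f|^2}$, which comes from the $d(\eta c)\,\xi\xi f$ term. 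In the second fraction the $\xi f$-component $d(\eta c)\,\xi f$ of $\tilde\eta\tilde\eta f$ adds a summand $a\,d(\eta c)|\xi f|^2$ to $\inner{\tilde\xi f}{\tilde\eta\tilde\eta f}$, and a short calculation shows this produces exactly the same non-invariant summand. Hence the two extra pieces cancel in the difference \eqref{eq:wobs}, giving $\tilde\kappa_t=\kappa_t$ on $S(f)$.

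The hard part is precisely this bookkeeping of the third-order term: checking that only the $\xi\xi f$-component of $\tilde\xi\tilde\eta\tilde\eta f$ survives in $\det(\xi f,\eta\eta f,\,\cdot\,)$, and that the resulting non-invariant contribution coincides, coefficient for coefficient, with the one generated by the inner product in the second fraction. It is this exact matching — built into the design of \eqref{eq:wobs}, where the subtraction is what removes the frame-dependence — that makes $\kappa_t$ well defined; a naive use of the first fraction alone would not be invariant.
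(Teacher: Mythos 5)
Your proof is correct and follows essentially the same route as the paper: the same change of frame \eqref{eq:newvf}, the same restrictions \eqref{eq:newvf1} on $S(f)$, the same identification of $\tilde\xi\tilde\eta\tilde\eta f$ modulo $\xi f$ and $\eta\eta f$ as $ad\,(\eta c)\,\xi\xi f+ad^2\,\xi\eta\eta f$, and the same cancellation of the $(\eta c)/d$ terms between the two fractions. Your observation that the tangency of $\xi$ to $S(f)$ lets you differentiate the restriction of $\tilde\eta\tilde\eta f$ rather than expand all third derivatives is a valid and slightly cleaner way to obtain the expression the paper simply asserts.
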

\begin{proof}
 Define a new pair $(\tilde{\xi}, \tilde{\eta})$ of adapted vector
fields as in \eqref{eq:newvf}. By \eqref{eq:newvf1}, we have
$$
\tilde\xi\tilde\eta\tilde\eta f
=
x_1\xi f+x_2\eta\eta f
+ad\,\eta c\,\xi\xi f+ad^2 \xi\eta\eta f
$$
holds on $S(f)$, where $x_1,x_2$ are some functions. Thus again by
\eqref{eq:newvf1}, it holds that
\begin{align*}
&\dfrac{\det(\tilde\xi f,\,\tilde\eta\tilde\eta f,\,
\tilde\xi\tilde\eta\tilde\eta f)}
{|\tilde\xi f\times\tilde\eta\tilde\eta f|^2}
-
\dfrac{\det(\tilde\xi f,\,\tilde\eta\tilde\eta f,\,
\tilde\xi\tilde\xi f)
\inner{\tilde\xi f}{\tilde\eta\tilde\eta f}}
{|\tilde\xi f|^2|\tilde\xi f\times\tilde\eta\tilde\eta f|^2}\\[4mm]
=&
\dfrac{\det(\xi f,\,\eta\eta f,\,\eta c\,\xi\xi f+d\xi\eta\eta f)}
{d|\xi f\times \eta\eta f|^2}
-
\dfrac{
\inner{\xi f}{\eta c\,\xi f+d\eta\eta f}
|\xi f,\ \eta\eta f,\,\xi\xi f|}
{d|\xi f|^2|\xi f\times \eta\eta f|^2}\\[4mm]
=&
\frac{\det(\xi f,\,\eta\eta f,\,\xi\eta\eta f)}
{|\xi f\times\eta\eta f|^2}
-
\frac{\det(\xi f,\,\eta\eta f,\,\xi\xi f)
\inner{\xi f}{\eta\eta f}}
{|\xi f|^2|\xi f\times\eta\eta f|^2}.
\end{align*}
Thus the proposition follows.
\end{proof}
%%%%%%%%%%%%%%%%%%%%%%%%%%%%%%%%%%%%%%%%%%%%%%%%%%%

In the following equation we give the expression of the cusp-directional
torsion for an adapted coordinate system:
$$
\kappa_t(u,0)=  \frac{\det(f_u,\,f_{vv},\,f_{uvv})} {|f_{u}\times
f_{vv}|^2}(u,v) - \frac{\det(f_u,\,f_{vv},\,f_{uu})
\inner{f_u}{f_{vv}}} {|f_u|^2|f_u\times f_v|^2} (u,0).
$$
Moreover, if $(u,v)$ satisfies $\inner{f_u}{f_{vv}}(u,0)=0$, then we
have the following simple expression:
$$
\kappa_t(u,0)= \frac{\det(f_u,\, f_{vv},\, f_{uvv})} {|f_u \times
f_{vv}|^{2}}\, (u,0)\,.
$$

\medskip

The next result gives $\kappa_t(\zv)$  for the normal form
of cuspidal edges and we omit its proof, as it is just a
straightforward calculation.

\begin{proposition}
Let $f$ be a map-germ of the form \eqref{eq:west3}.
Then
$
\kappa_t(\zv)= b_{12}
$
holds.
\end{proposition}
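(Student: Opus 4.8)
The plan is to verify the claimed value $\kappa_t(\zv) = b_{12}$ by direct substitution of the normal form \eqref{eq:west3} into the adapted-coordinate expression for the cusp-directional torsion. Since the coordinate system $(u,v)$ appearing in \eqref{eq:west3} is adapted (the $u$-axis is the singular curve and $\partial_v$ is null), I may use the formula
$$
\kappa_t(u,0)=  \frac{\det(f_u,\,f_{vv},\,f_{uvv})} {|f_{u}\times
f_{vv}|^2}(u,0) - \frac{\det(f_u,\,f_{vv},\,f_{uu})
\inner{f_u}{f_{vv}}} {|f_u|^2|f_u\times f_{vv}|^2} (u,0),
$$
and evaluate every ingredient at $\zv$.

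First I would compute the relevant partial derivatives of $f$ from \eqref{eq:west3} and evaluate them at the origin. Writing $f=(u, f_2, f_3)$ with the stated polynomial-plus-$h$ expansion, one reads off at $\zv$: $f_u=(1,0,0)$, $f_{vv}=(0,1,0)$, $f_{uu}=(0,a_{20},b_{20})$, and $f_{uvv}=(0,0,b_{12})$. The key observation is that the second (subtracted) term vanishes at $\zv$, because $\inner{f_u}{f_{vv}}(\zv)=\inner{(1,0,0)}{(0,1,0)}=0$; this is exactly the simplification noted just before the proposition, so only the first term survives. It then remains to evaluate the surviving term: $f_u\times f_{vv}=(0,0,1)$ at $\zv$, giving denominator $|f_u\times f_{vv}|^2=1$, while $\det(f_u,f_{vv},f_{uvv})=\det(e_1,e_2,(0,0,b_{12}))=b_{12}$. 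Hence $\kappa_t(\zv)=b_{12}$.

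The only point requiring a little care is confirming that the higher-order remainder $h(u,v)$ does not contribute. Its third component contains a $uv^3h_4(u)$ term, so $f_{uvv}$ could in principle pick up a contribution; however, differentiating $uv^3 h_4(u)$ twice in $v$ and once in $u$ and setting $v=0$ kills it (every surviving monomial still carries a positive power of $v$), and the $u^2v^2h_3(u)$ and $v^4h_5$ terms likewise vanish at $v=0$ after two $v$-derivatives and one $u$-derivative. I expect the mild bookkeeping of checking that no stray term from $h$ survives in $f_{uvv}(\zv)$ to be the main (and only mildly tedious) obstacle; everything else is immediate from the explicit normal form. Since the proposition itself states that the proof is a straightforward calculation, a brief indication of the two surviving matrix entries together with the vanishing of the subtracted term suffices.
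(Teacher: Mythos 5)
Your computation is correct and is exactly the ``straightforward calculation'' that the paper omits: substitute the normal form into the adapted-coordinate formula for $\kappa_t$, note $\inner{f_u}{f_{vv}}(\zv)=0$ kills the second term, and read off $\det(f_u,f_{vv},f_{uvv})(\zv)=b_{12}$ with unit denominator. Your check that the remainder $h$ contributes nothing to $f_{uvv}(\zv)$ is the right piece of diligence, so nothing is missing.
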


 Let us state the geometric meaning of the invariant
$\kappa_t$.
\begin{proposition}
\label{thm:admi} Let $f:(\R^2,\zv)\to(\R^3,\zv)$ be a map-germ,
$\zv$ a cuspidal edge, $\nu$ a unit normal vector field along $f$,
$\gamma(t)$ a parametrization of $S(f)$ and $q \in S(f)$. If
$\operatorname{pr}_{\vec{v}}\circ f$ is locally a bijection, then\/
$\kappa_t=0$ on $S(f)$ near $q$. Here, $\operatorname{pr}_{\vec{v}}$
is the orthogonal projection to the orthogonal plane to
$\vec{v}=\image df_{\zv}(T_{\zv}\R^2)\times \nu(\zv)$.
\end{proposition}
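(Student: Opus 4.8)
The plan is to reduce everything to the normal form \eqref{eq:west3} at the singular point and to read the projection $\operatorname{pr}_{\vec{v}}\circ f$ directly in those coordinates. So first I would put $f$ in the form \eqref{eq:west3} with $q=\zv$; then $f_u(\zv)=(1,0,0)$ and $\nu(\zv)=(0,0,1)$, hence $\vec{v}=\image df_\zv(T_\zv\R^2)\times\nu(\zv)=(1,0,0)\times(0,0,1)=(0,-1,0)$. The plane orthogonal to $\vec{v}$ is the $xz$-plane, and identifying it with $\R^2$ through the first and third coordinates gives $\operatorname{pr}_{\vec{v}}\circ f(u,v)=(u,\,f_3(u,v))$, where $f_3$ denotes the third component in \eqref{eq:west3}.

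Next I would determine when this germ is a local bijection. Because its first coordinate equals $u$, it is injective near $\zv$ exactly when, for every fixed small $u$, the one-variable germ $v\mapsto f_3(u,v)$ is injective near $v=0$. On $S(f)=\{v=0\}$ one has $(f_3)_v(u,0)=0$, so
$$
f_3(u,v)-f_3(u,0)=\tfrac12(f_3)_{vv}(u,0)\,v^2+\tfrac16(f_3)_{vvv}(u,0)\,v^3+\cdots,\qquad (f_3)_{vvv}(\zv)=b_{03}\neq0 .
$$
If $(f_3)_{vv}(u_0,0)\neq0$ for some small $u_0$, then $v=0$ is a nondegenerate critical point of $v\mapsto f_3(u_0,v)$, so this germ has a genuine local extremum there and is two-to-one nearby; moreover the competing critical point sits at $v\approx-(f_3)_{vv}(u_0,0)/(f_3)_{vvv}(u_0,0)$, which tends to $0$ as $u_0\to0$ since $b_{03}\neq0$, so both preimages lie inside any prescribed neighbourhood. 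Hence local bijectivity forces $(f_3)_{vv}(u,0)\equiv0$ for all $u$ near $0$.

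Finally I would differentiate this identity in $u$ to obtain $(f_3)_{uvv}(u,0)\equiv0$ and evaluate at $u=0$, which reads $b_{12}=0$. Since $\kappa_t(\zv)=b_{12}$ by the preceding Proposition, this gives $\kappa_t(q)=0$. As $q$ is an arbitrary point of the cuspidal edge, running the same normalization and argument at each point $q'\in S(f)$ near $q$—with $\vec{v}$ replaced by $\image df_{q'}(T_{q'}\R^2)\times\nu$—yields $\kappa_t=0$ on $S(f)$ near $q$, as claimed.

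The step I expect to be the main obstacle is precisely the passage from local bijectivity to the pointwise vanishing $(f_3)_{vv}(u,0)=0$: one must check that a nonzero quadratic coefficient really does destroy injectivity inside the germ's neighbourhood, i.e. that the second critical point of $v\mapsto f_3(u,v)$ stays close to $v=0$. This is exactly where the cuspidal-edge condition $b_{03}\neq0$ enters, since it controls the location of that critical point and guarantees that, once the quadratic term is removed, the cubic term makes $v\mapsto f_3(u,v)$ injective; thus the vanishing of $(f_3)_{vv}(u,0)$ is genuinely the only obstruction.
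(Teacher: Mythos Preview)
Your approach is essentially the same as the paper's: reduce to the normal form, identify $\operatorname{pr}_{\vec v}\circ f$ with $(u,f_3(u,v))$, argue that local bijectivity forces the quadratic coefficient $(f_3)_{vv}(u,0)$ to vanish identically, and read off $\kappa_t(\zv)=b_{12}=0$. The paper works with the form \eqref{eq:west} rather than \eqref{eq:west3} and phrases the obstruction as ``$g_u$ must be monotone'', but the argument is the same; your concern about the competing critical point is in fact unnecessary, since a nondegenerate critical point already makes $v\mapsto f_3(u_0,v)$ two-to-one on arbitrarily small $v$-intervals.
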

\begin{proof}
We may assume that $f$ is given by \eqref{eq:west}. Then
$$
\operatorname{pr}_{\nu(\zv)}\circ f(u,v)
=\big(u,a_2(u)+v^2a_3(u)+v^3b(u,v)\big).
$$
For a sufficiently small $u_0$,
$(u_0,a_2(u_0)+a_3(u_0)v^2+v^3b(u_0,v))$ is located on the line
$u=u_0$. If $u_0\ne u_1$ then these lines do not have a crossing.
Hence $\operatorname{pr}_{\vec{v}}\circ f$ is locally bijective if
and only if $g_{u}(v)=a_2(u)+a_3(u)v^2+v^3b(u,v)$ is monotone, for
any sufficiently small $u$. Since
$$g'_u(0)=0,\ g''_u(0)=2a_3(u)\ \text{and}\ g'''_u(0)=6b(u,0),$$
a necessary condition that $\operatorname{pr}_{\nu(\zv)}\circ f$ is
locally bijective is $a_3(u)=0$, for any sufficiently small $u$. Since
$a_3(0)=0$ and $a_3'(0)=\kappa_t(\zv)$, this proves the assertion.
\end{proof}

Let $f:(\R^2,\zv)\to(\R^3,\zv)$ be a map-germ, $\zv$ a cuspidal edge
and $M$ the image of $f$. Then the slice locus $M\cap N_{\zv}M$ is a
cusp. When a pair of adapted vector field $(\xi,\eta)$ satisfies
$\inner{\xi f}{\eta\eta f}\equiv0$ on $S(f)$, then $\eta\eta f\in
N_{\zv}M$ points to the direction where the cusp comes in
$N_{\zv}M$. We call this direction of the {\em cusp-direction}.
Proposition \ref{thm:admi} implies that the cusp-directional torsion
measures the rotation of cusp-direction along the singular curve of
the cuspidal edge. This is a reason that we call $\kappa_t$
cusp-directional torsion. Remark that \eqref{eq:wobs} is
well-defined for non-degenerate singularities whose null direction
is transverse to the singular direction (For example, the cuspidal
cross cap $(u,v)\mapsto(u,v^2,uv^3)$). In appendix, a global
property of $\kappa_t$ is discussed.
\medskip

\subsection{Edge inflectional curvature}
The invariants introduced in the previous sections for a map-germ
given by (\ref{eq:west3}) suggest us that there is one more
geometric invariant which should tell us about $a_{30}$ or $b_{30}$.
Let $(\xi, \eta)$ be a pair of adapted vector fields on $(\R^2,
\zv)$. We define the \emph{edge inflectional curvature}\/ as
follows:
$$
\kappa_i(u,v)= \frac{\det(\xi f,\ \eta\eta f,\ \xi\xi\xi f)} {|\xi
f|^3|\xi f\times\eta\eta f|}\,(u,v) - 3\frac{\inner{\xi f}{\xi\xi f}
\det(\xi f,\ \eta\eta f,\ \xi\xi f)} {|\xi f|^5|\xi f\times\eta\eta
f|}\,(u,v),
$$
$(u,v) \in S(f)$. If where $\xi$ is chosen satisfying $|\xi
f|=1$ on $S(f)$, then we have
$$
\kappa_i(u,v)=\frac{\det(\xi f,\ \eta\eta f,\ \xi\xi\xi f)} {|\xi
f\times\eta\eta f|}\,(u,v).
$$
\begin{proposition} The function  $\kappa_i$
does not depend on the choice of the pair $(\xi,\eta)$
of adapted vector fields.
\end{proposition}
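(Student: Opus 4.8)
The plan is to mimic the invariance arguments already used for $\kappa_n$ and for the cusp-directional torsion: I take a second adapted pair $(\tilde\xi,\tilde\eta)$ written as in \eqref{eq:newvf}, so that $\tilde\xi=a\xi+b\eta$ and $\tilde\eta=c\xi+d\eta$ with $b=c=0$ and $ad>0$ on $S(f)$, and show that the defining expression for $\kappa_i$ is unchanged. The transformation rules for $\tilde\xi f$, $\tilde\xi\tilde\xi f$ and $\tilde\eta\tilde\eta f$ on $S(f)$ are already recorded in \eqref{eq:newvf1}; in particular $\tilde\xi f\equiv a\,\xi f$, $\tilde\xi\tilde\xi f\equiv a(\xi a)\,\xi f+a^2\,\xi\xi f$ and $\tilde\eta\tilde\eta f\equiv d(\eta c)\,\xi f+d^2\,\eta\eta f$ on $S(f)$. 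The only genuinely new ingredient I must compute is the third-order term $\tilde\xi\tilde\xi\tilde\xi f$ along $S(f)$.

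First I would compute $\tilde\xi\tilde\xi\tilde\xi f$ modulo $\operatorname{span}\{\xi f,\eta\eta f\}$, since only the components transverse to this plane survive in the determinant $\det(\tilde\xi f,\tilde\eta\tilde\eta f,\,\cdot\,)$. Expanding $(a\xi+b\eta)^3 f$ and restricting to $S(f)$, I would repeatedly use the facts that $b\equiv 0$ and $\xi b\equiv 0$ on $S(f)$ (because $b$ vanishes identically along the singular curve, to which $\xi$ is tangent), that $\eta f\equiv\xi\eta f\equiv\zv$ (Remark \ref{rem:adapted}), and that $\eta\xi f$ is parallel to $\xi f$ on $S(f)$ (since $(\xi\eta-\eta\xi)f$ is a multiple of $\xi f$ there). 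After discarding every term proportional to $\xi f$, the surviving expression is $\tilde\xi\tilde\xi\tilde\xi f\equiv 3a^2(\xi a)\,\xi\xi f+a^3\,\xi\xi\xi f$ modulo $\operatorname{span}\{\xi f,\eta\eta f\}$ on $S(f)$.

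With this in hand, the rest is bookkeeping of homogeneity weights. Using $\tilde\xi f\times\tilde\eta\tilde\eta f=ad^2(\xi f\times\eta\eta f)$ and the above, the first term of $\kappa_i$ for the new frame becomes $\dfrac{\det(\xi f,\eta\eta f,\xi\xi\xi f)}{|\xi f|^3|\xi f\times\eta\eta f|}+\dfrac{3(\xi a)\det(\xi f,\eta\eta f,\xi\xi f)}{a\,|\xi f|^3|\xi f\times\eta\eta f|}$, where the second summand comes precisely from the $3a^2(\xi a)\,\xi\xi f$ contamination. A parallel computation shows that the correction term $3\,\inner{\tilde\xi f}{\tilde\xi\tilde\xi f}\det(\tilde\xi f,\tilde\eta\tilde\eta f,\tilde\xi\tilde\xi f)/(|\tilde\xi f|^5|\tilde\xi f\times\tilde\eta\tilde\eta f|)$ equals $\dfrac{3\inner{\xi f}{\xi\xi f}\det(\xi f,\eta\eta f,\xi\xi f)}{|\xi f|^5|\xi f\times\eta\eta f|}+\dfrac{3(\xi a)\det(\xi f,\eta\eta f,\xi\xi f)}{a\,|\xi f|^3|\xi f\times\eta\eta f|}$. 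Subtracting, the two stray $\xi\xi f$-terms cancel, and all powers of $a$ and $d$ cancel because the determinant scales as $a^4 d^2$ while the denominator scales as $|a|^4 d^2$; hence $\tilde\kappa_i=\kappa_i$.

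I expect the main obstacle to be the third-order computation of the second step: keeping track of which derivatives of $a$ and $b$ survive on $S(f)$ and verifying that the only transverse contamination is the single $\xi\xi f$-term with coefficient $3a^2(\xi a)$. The whole point of the $-3\inner{\xi f}{\xi\xi f}\,\cdots$ correction in the definition is to be exactly calibrated to absorb this term; confirming that the coefficient is precisely $3$ (and not some other constant) is the crux, and it is what makes $\kappa_i$ frame-independent. Note that, unlike $\kappa_s$, no residual sign appears, since the numerator carries the even power $a^4$.
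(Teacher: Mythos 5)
Your proof is correct and follows essentially the same route as the paper's: both hinge on the computation $\tilde\xi\tilde\xi\tilde\xi f = a\big((\xi a)^2+a\,\xi\xi a\big)\xi f+3a^2(\xi a)\,\xi\xi f+a^3\,\xi\xi\xi f$ on $S(f)$ (you record it only modulo the span of $\xi f$ and $\eta\eta f$, which is all the determinant sees), followed by the observation that the stray $3a^2(\xi a)\,\xi\xi f$ contributions to the two terms of $\kappa_i$ cancel while the powers of $a$ and $d$ drop out. No gap.
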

\begin{proof}
Define a new pair $(\tilde{\xi}, \tilde{\eta})$ of adapted vector
fields as in \eqref{eq:newvf}. By \eqref{eq:newvf1}, we have
$$
\tilde\xi\tilde\xi\tilde\xi f
=
a\big((\xi a)^2+a\xi\xi a\big)\xi f
+3a^2\xi a\xi\xi f+a^3 \xi\xi\xi f
$$
holds on $S(f)$.
Thus again by \eqref{eq:newvf1},
we see that
\begin{align*}
&\frac{\det(\tilde\xi f,\ \tilde\eta\tilde\eta f,\
\tilde\xi\tilde\xi\tilde\xi f)}
{|\tilde\xi f|^3|\tilde\xi f\times\tilde\eta\tilde\eta f|}
-
3\frac{\inner{\tilde\xi f}{\tilde\xi\tilde\xi f}
\det(\tilde\xi f,\ \tilde\eta\tilde\eta f,\ \tilde\xi\tilde\xi f)}
{|\tilde\xi f|^5|\tilde\xi f\times\tilde\eta\tilde\eta f|}\\[4mm]
=&
\dfrac{3\xi a\det(\xi f,\ \eta\eta f,\ \xi\xi f)}
{a|\xi f|^3|\xi f\times\eta\eta f|}
+
\frac{\det(\xi f,\ \eta\eta f,\ \xi\xi\xi f)}
{|\xi f|^3|\xi f\times\eta\eta f|}\\[4mm]
&\hspace{30mm}
-
\dfrac{3\big(\xi a\inner{\xi f}{\xi f}+a\inner{\xi f}{\xi\xi f}\big)
\det(\xi f,\ \eta\eta f,\ \xi\xi f)}
{a|\xi f|^5|\xi f\times \eta\eta f|}\\[4mm]
=&
\frac{\det(\xi f,\ \eta\eta f,\ \xi\xi\xi f)}
{|\xi f|^3|\xi f\times\eta\eta f|}
-
3\frac{\inner{\xi f}{\xi\xi f}
\det(\xi f,\ \eta\eta f,\ \xi\xi f)}
{|\xi f|^5|\xi f\times\eta\eta f|}
\end{align*}
holds on $S(f)$ as we claimed.
\end{proof}
\medskip

The expression for $\kappa_i$ at an adapted coordinate system is the
following one:
$$
\kappa_i (u,0) =  \frac{\det(f_u,\ f_{vv},\ f_{uuu})}
{|f_u|^3|f_u\times f_{vv}|}\,(u,0) - 3\frac{\inner{f_u}{f_{uu}}
\det(f_u,\ f_{vv},\ f_{uu})} {|f_u|^5|f_u\times f_{vv}|}\,(u,0).
$$
The next result gives the edge inflectional curvature at $\zv$ for
the normal form of cuspidal edges. Its proof is just a
straightforward calculation, and so we omit it here.

%%%%%%%%%%%%%%%%%%%%%%%%%%%%%%%%%%%%%%%%%%%%%%%%%%%%%%%%%%%%%%%
\begin{proposition}
Let $f$ be a map-germ given by \eqref{eq:west3}. Then
$
\kappa_i(\zv)= b_{30}
$
holds.
\end{proposition}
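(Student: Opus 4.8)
The plan is to evaluate the adapted-coordinate expression for $\kappa_i$ directly on the normal form \eqref{eq:west3}, using the fact (already noted in the paper) that the coordinate system $(u,v)$ in \eqref{eq:west3} is adapted, so no change of vector fields is needed. First I would record the derivatives of $f$ entering the formula, computed at the origin. Since every summand of the remainder $h(u,v)$ has total degree at least four, each of its partial derivatives of order at most three still carries a positive power of $u$ or $v$ and hence vanishes at $\zv$; consequently only the explicit polynomial part of \eqref{eq:west3} contributes to the $3$-jet. Reading off the coefficients then gives
$$
f_u(\zv)=(1,0,0),\quad f_{uu}(\zv)=(0,a_{20},b_{20}),\quad f_{uuu}(\zv)=(0,a_{30},b_{30}),\quad f_{vv}(\zv)=(0,1,0).
$$

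Next I would simplify the two terms of the adapted-coordinate formula for $\kappa_i$. From the vectors above one has $|f_u(\zv)|=1$ and $f_u(\zv)\times f_{vv}(\zv)=(0,0,1)$, so $|f_u\times f_{vv}|(\zv)=1$; thus every normalizing denominator equals $1$. Crucially, $\inner{f_u}{f_{uu}}(\zv)=\inner{(1,0,0)}{(0,a_{20},b_{20})}=0$, which makes the entire second summand of the formula vanish. It therefore remains only to compute the single determinant $\det(f_u,f_{vv},f_{uuu})(\zv)$.

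Finally, expanding that determinant along the first column yields $\det(f_u,f_{vv},f_{uuu})(\zv)=b_{30}$. Combining this with the denominators equal to $1$ and the vanishing second term gives $\kappa_i(\zv)=b_{30}$, as claimed.

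There is no genuine analytic obstacle here: the computation is the routine bookkeeping of the required partial derivatives and one $3\times 3$ determinant, which is why the paper omits it. The only step deserving an explicit word of justification is that the quartic-and-higher remainder $h$ leaves the $3$-jet of $f$ at $\zv$ unchanged, and this is immediate from its degree structure. The pleasant simplification that the second term drops out is a direct consequence of the normal form's placing the singular direction along the $u$-axis with $f_u(\zv)$ orthogonal to $f_{uu}(\zv)$.
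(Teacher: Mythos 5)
Your computation is correct and is exactly the ``straightforward calculation'' that the paper omits: you evaluate the adapted-coordinate formula for $\kappa_i$ on the normal form \eqref{eq:west3}, using that $(u,v)$ there is adapted, that $h$ does not affect the $3$-jet, and that $\inner{f_u}{f_{uu}}(\zv)=0$ kills the second term, leaving $\det(f_u,f_{vv},f_{uuu})(\zv)=b_{30}$. Nothing is missing.
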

%%%%%%%%%%%%%%%%%%%%%%%%%%%%%%%%%%%%%%%%%%%%%%%%%%%%%%%%%%%%%%

Let us consider the geometric meaning of the invariant $\kappa_i$.
Since $\eta f=0$, $\eta\eta f$ points in the direction which the
cusp comes, and $\det(\xi f,\xi\xi\xi f,\eta\eta f) = \inner{\xi
f\times \xi\xi\xi f}{\eta\eta f}$. So, the invariant $\kappa_i$
measures
the difference between the vector $\xi f\times \xi\xi\xi f$ and the
cusp direction $\eta\eta f$. Here, $\xi f\times \xi\xi\xi
f=\hat\gamma'(s)\times\hat\gamma'''(s)$, where $s$ is the arc-length
of $\hat\gamma$. Let $\kappa$ and $\tau$ be the curvature and
torsion of $\hat{\gamma}$, and assume $\kappa>0$. Consider the
Frenet frame $\{\vec{t},\vec{n},\vec{b}\}$ and assume that
$\hat{\gamma}'\times \hat{\gamma}'''$ is  constant. Then, by the
Frenet-Serret formulas, it holds that
$$
\kappa^2\tau\vec{t} + (-2\kappa'\tau-\kappa\tau')\vec{n} +
(\kappa''-\kappa\tau^2) \vec{b}\equiv\zv.
$$
Since $\kappa>0$, we have $\tau\equiv0$ and $\kappa''\equiv0$. This
means that $\hat{\gamma}$ is a plane curve and a clothoid, or a
circle.

%%%%%%%%%%%%%%%%%%%%%%%%%%%%%%%%%%%%%%%%%%%%%%%%%%%%%%%%%%%%%%%%%
%%%%%%%%%%%%%%%%%%%%%%%%%%%%%%%%%%%%%%%%%%%%%%%%%%%%%%%%%%%%%%%%%

\section{Geometric invariants up to order three}\label{sec:main:th}
Let $f$ be a map-germ given by \eqref{eq:west3}. Then it holds that
$$
\kappa=\sqrt{a_{20}^2+b_{20}^2},\
\tau=\dfrac{a_{20}b_{30}-b_{20}a_{30}}{a_{20}^2+b_{20}^2},\
\kappa_s=a_{20},\
\kappa_n=b_{20},\
\kappa_c=b_{03},\
\kappa_t=b_{12},\
\kappa_i=b_{30}
$$
at $\zv$. We see that $\kappa$ is written in terms of
$a_{20}$ and $b_{20}$.
However, the other six invariants are independent of
each other. Moreover,
they determine all the third-order coefficients of the
normal form \eqref{eq:west3}. Therefore, we have the following
theorem.

%%%%%%%%%%%%%%%%%%%%%%%%%%%%%%%%%%%%%%%%%%%%%%%%%%%%%%%%%%%%%%%
\begin{theorem}\label{thm:main}
Let\/ $f, g : (\R^2, \zv) \rightarrow (\R^3, \zv)$ be map-germs, and\/
$\zv$ cuspidal edges which have the same invariants\/ $\tau,
\kappa_s,\kappa_n,\kappa_c, \kappa_t $ and\/ $\kappa_i$ at\/ $\zv$
respectively, and\/ $\kappa_n (\zv) \ne0$. Then there exists a
diffeomorphism-germ\/ $\phi:(\R^2, \zv)\to(\R^2, \zv)$ and an
isometry-germ\/ $\Phi:(\R^3, \zv)\to(\R^3, \zv)$ which satisfies
$$
f(u,v)-\Phi\big(g(\phi(u,v))\big)=O(4),
$$
where\/ $O(4)=\{h(u,v): (\R^2, \zv) \rightarrow (\R^3, \zv)
\,|\,(\partial^{i+j}/\partial u^i\partial v^j)h(\zv)=\zv,\
i+j\leq 3\}$.
Using the differential of invariants,
if\/ $f$ and\/ $g$ have the same invariants\/
$\kappa_s$, $\kappa_n$, $\kappa_c$, $\kappa_t$, $\kappa_s'$
and\/ $\kappa_n'$ at\/ $\zv$,
then the same assertion holds.
\end{theorem}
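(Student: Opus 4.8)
The plan is to reduce everything to the unique normal form \eqref{eq:west3} and then read off coefficients. Since any cuspidal edge can be brought to the form \eqref{eq:west3} by a source diffeomorphism and a target isometry, and since all coefficients in that expansion are uniquely determined, it suffices to show that the six listed invariants $\tau,\kappa_s,\kappa_n,\kappa_c,\kappa_t,\kappa_i$ together pin down all the coefficients $a_{20},a_{30},b_{20},b_{30},b_{12},b_{03}$ appearing in the third-order part of \eqref{eq:west3}. First I would apply the Normal Form Theorem to both $f$ and $g$, obtaining isometries $\Phi_f,\Phi_g$ and diffeomorphisms $\phi_f,\phi_g$ that put each germ into the form \eqref{eq:west3}. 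The statement $f(u,v)-\Phi(g(\phi(u,v)))=O(4)$ is then equivalent to saying that the degree-$\le 3$ Taylor parts of the two normal forms agree, i.e. that $f$ and $g$ have the same coefficients $a_{20},a_{30},b_{20},b_{30},b_{12},b_{03}$.

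The heart of the argument is the explicit dictionary already assembled in the excerpt. Gathering the formulas from Theorem \ref{thm:rel1} and the propositions of Section \ref{sec:invar}, we have at $\zv$:
\begin{equation*}
\kappa_s=a_{20},\quad
\kappa_n=b_{20},\quad
\kappa_t=b_{12},\quad
\kappa_i=b_{30},\quad
\kappa_c=b_{03},\quad
\tau=\frac{a_{20}b_{30}-b_{20}a_{30}}{a_{20}^2+b_{20}^2}.
\end{equation*}
The first five equations immediately recover $a_{20},b_{20},b_{12},b_{30},b_{03}$ from $\kappa_s,\kappa_n,\kappa_t,\kappa_i,\kappa_c$. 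The only remaining coefficient is $a_{30}$, which must be extracted from the torsion formula. Here I would solve the torsion relation for $a_{30}$: since $a_{20}^2+b_{20}^2\ne0$ (guaranteed by $\kappa_n(\zv)=b_{20}\ne0$), one gets
\begin{equation*}
a_{30}=\frac{a_{20}b_{30}-(a_{20}^2+b_{20}^2)\tau}{b_{20}},
\end{equation*}
so $a_{30}$ is determined by $\tau$ together with the already-known $a_{20},b_{20},b_{30}$. Hence equal invariants force equal coefficients, and the two normal forms coincide modulo $O(4)$; composing the normalizing transformations gives the desired $\phi$ and $\Phi$.

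The main obstacle, and the reason for the hypothesis $\kappa_n(\zv)\ne0$, is exactly the solvability of the torsion equation for $a_{30}$: the torsion determines only the combination $a_{20}b_{30}-b_{20}a_{30}$, so to isolate $a_{30}$ one must divide by $b_{20}=\kappa_n$, which requires $\kappa_n\ne0$. When $\kappa_n=0$ the torsion carries no information about $a_{30}$ and the six invariants no longer suffice; this motivates the alternative hypothesis in the second sentence of the theorem. For that second assertion I would replace the pair $(\tau,\kappa_i)$ by the derivatives $(\kappa_s',\kappa_n')$ and use the formulas $\kappa_s'=a_{30}+b_{12}b_{20}$ and $\kappa_n'=b_{30}-a_{20}b_{12}$ from Theorem \ref{thm:rel1}. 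These two equations are \emph{linear} in $a_{30}$ and $b_{30}$ with the trivial coefficient matrix (each unknown appears in exactly one equation), so $a_{30}=\kappa_s'-b_{12}b_{20}$ and $b_{30}=\kappa_n'+a_{20}b_{12}$ are recovered with no nonvanishing hypothesis needed; all six coefficients are then determined just as before, and the conclusion follows identically. The argument is therefore essentially a bookkeeping of the dictionary plus one case distinction on whether $\kappa_n$ vanishes.
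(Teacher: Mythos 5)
Your proposal is correct and follows essentially the same route as the paper: reduce both germs to the normal form \eqref{eq:west3}, use the dictionary $\kappa_s=a_{20}$, $\kappa_n=b_{20}$, $\kappa_c=b_{03}$, $\kappa_t=b_{12}$, $\kappa_i=b_{30}$ together with the torsion formula to recover $a_{30}$ (dividing by $b_{20}=\kappa_n\ne0$), and invoke Theorem \ref{thm:rel1} for the second claim. The paper's own proof is just a terser statement of exactly this bookkeeping, so nothing further is needed.
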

\begin{proof}
By the formula for $\tau, \kappa_s,\kappa_n,\kappa_c, \kappa_t$
and\/ $\kappa_i$, if $\kappa_n\ne0$, these six values determine
all the coefficients $a_{20},a_{30},b_{20},b_{30},b_{12}$ and
$b_{03}$ in \eqref{eq:west3}. Thus we have the result.
By the same arguments, the second claim is proven by
Theorem \ref{thm:rel1}.
\end{proof}
%%%%%%%%%%%%%%%%%%%%%%%%%%%%%%%%%%%%%%%%%%%%%%%%%%%%%%%%%%%%%%%%%%%%%%%%%%%%%%%
We remark that for given real numbers $\tau,
\kappa_s,\kappa_n,\kappa_c, \kappa_t$ and $\kappa_i$, there exists a
map-germ $f$ at $\zv$ such that $\zv$ is a cuspidal edge, and its
six invariants at $\zv$ are $\tau, \kappa_s,\kappa_n,\kappa_c,
\kappa_t$ and $\kappa_i$, respectively, just by substituting these
real numbers into \eqref{eq:west3} and applying $h(u,v)=0$. For
global realization of fronts, see \cite{suykodai}. In Figure
\ref{fig:1} we have drawn surfaces that are images of map-germs
given by \eqref{eq:west3}. Invariants not specified are zero.
\vspace{-5mm}

\begin{figure}[!ht]
\centering
\begin{tabular}{ccc}
\includegraphics[width=.25\linewidth,bb=14 14 275 238]
{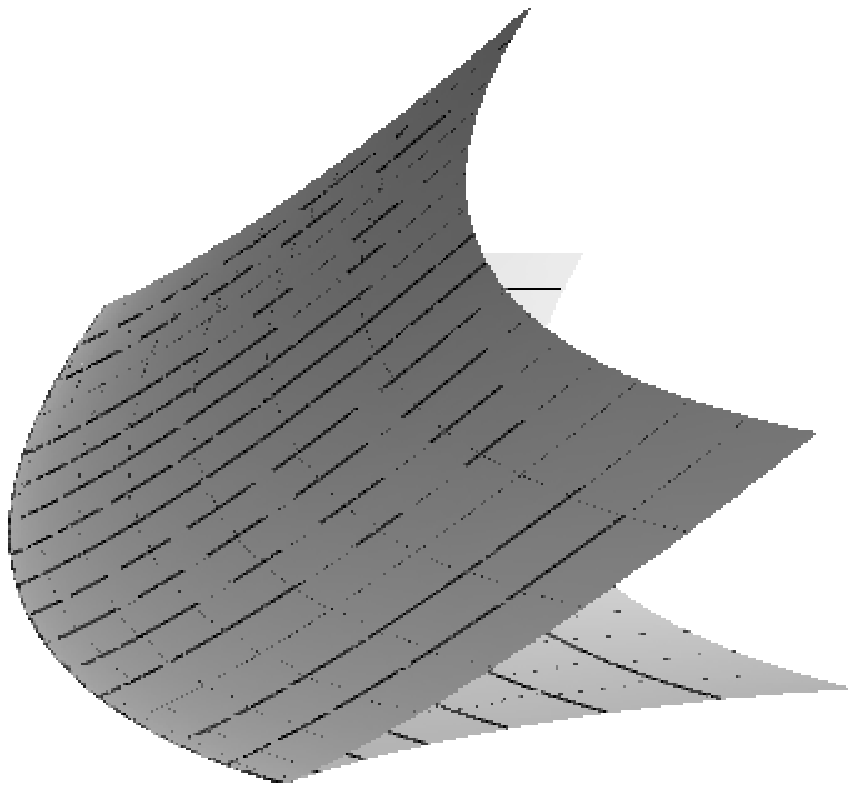} &
\includegraphics[width=.25\linewidth,bb=14 14 244 271]
{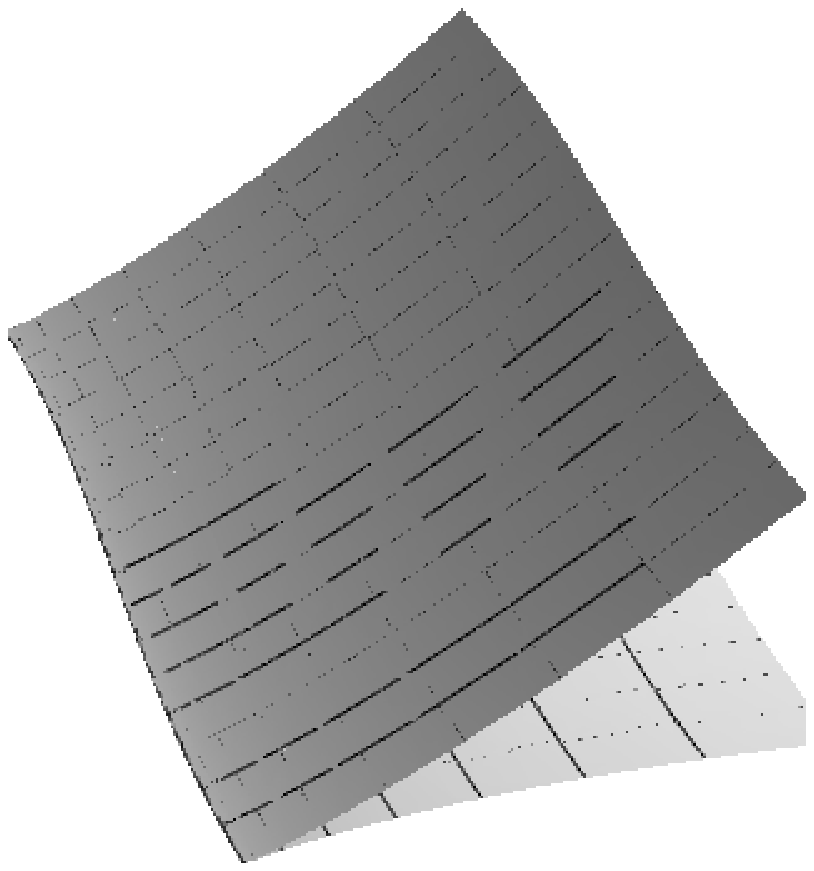} &
\includegraphics[width=.25\linewidth,bb=14 14 267 285]
{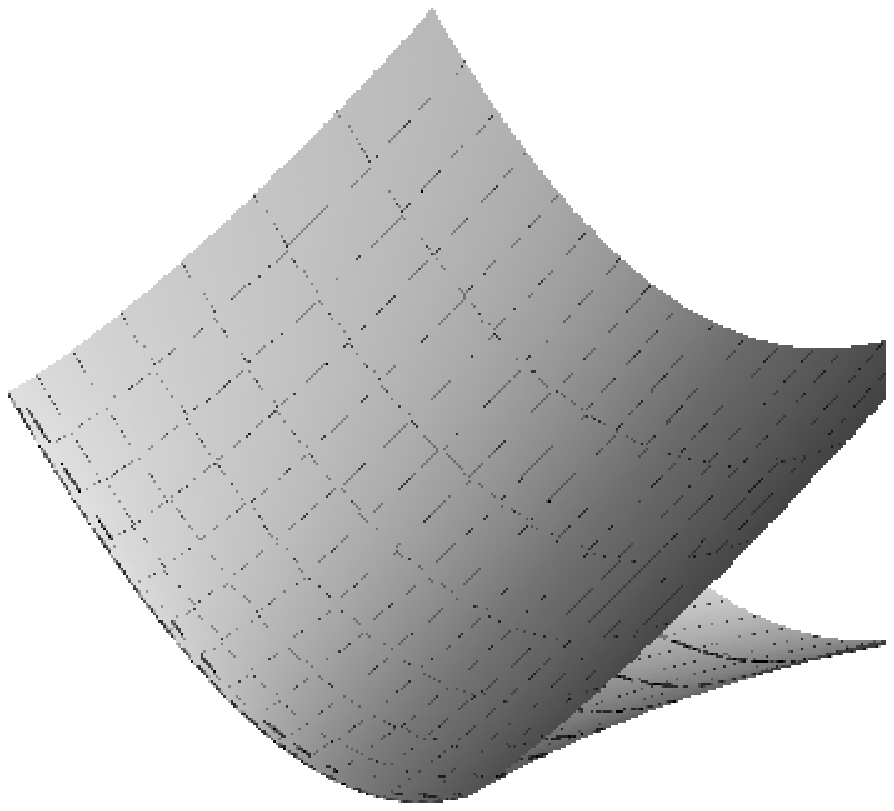}
\\
$(\kappa_s=)a_{20}=3,\ b_{03}=1$&
$a_{30}=3,\ b_{03}=1$&
$(\kappa_n=)b_{20}=3,\ b_{03}=1$\\[-5mm]
\includegraphics[width=.25\linewidth,bb=14 14 269 325]
{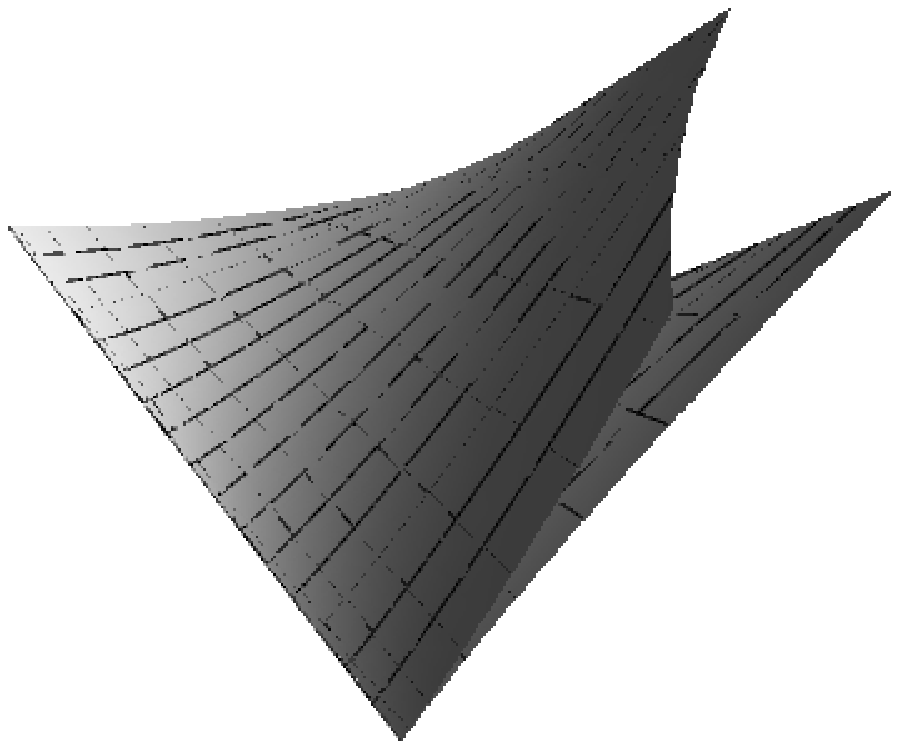} &
\includegraphics[width=.25\linewidth,bb=14 14 283 307]
{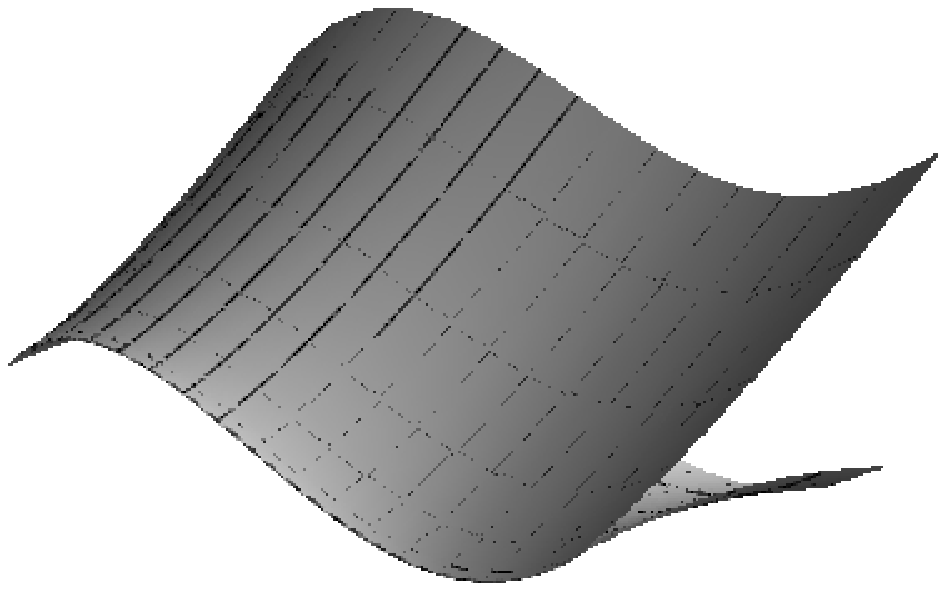} &
\includegraphics[width=.25\linewidth,bb=14 14 288 331]
{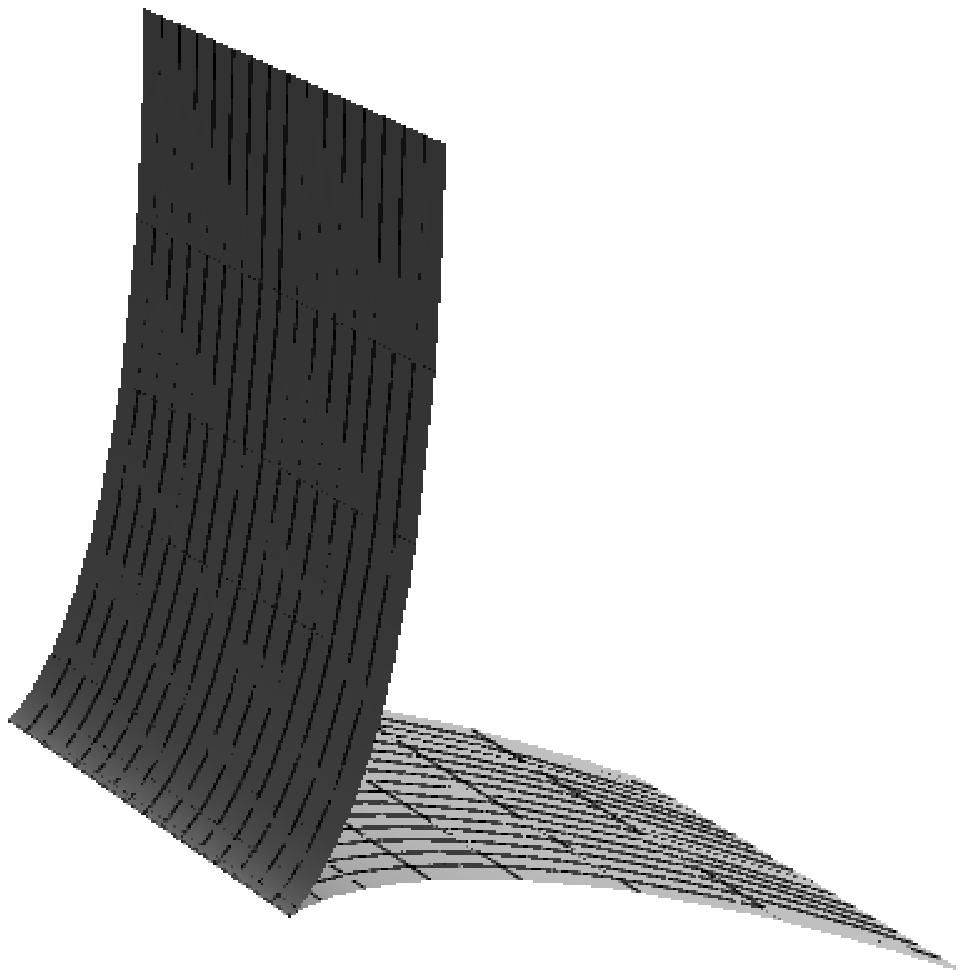}\\
$(\kappa_t=)b_{12}=3,\ b_{03}=1$&
$(\kappa_i=)b_{30}=3,\ b_{03}=1$& $(\kappa_c=)b_{03}=3$
\end{tabular}
\caption{Invariants of cuspidal edges}
\label{fig:1}
\end{figure}

%%%%%%%%%%%%%%%%%%%%%%%%%%%%%%%%%%%%%%%%%%%%%%%%%%%%%%%%%%%%%%%%%%%%%%%%%%%%%%%
\subsection{Example: Tangent developable}
Let $\hat{\gamma}: \R \rightarrow \R^3$ be a unit speed space curve
which has curvature $\kappa(u) > 0$ and torsion $\tau(u)\ne0$, for
all $u \in I$, and let $\{\vec{t}, \vec{n}, \vec{b}\}$ be the Frenet
frame. Let $f: I \times \R \rightarrow \R^3$ be given by $f(u,v) =
\hat{\gamma}(u) + v \hat{\gamma}'(u)$. Then $f$ is called a {\em
tangent developable surface} (see Figure \ref{fig:2}). Then
$S(f)=\{(u,0)\}$ and $(u,0)$ is a cuspidal edge. The unit normal
vector field is $\vec{b}(u)$, and the area density function is
proportional to $v$. Therefore, taking $\xi =
\partial_u$ and $\eta =
-\partial_u +\partial_v$,  then $(\xi, \eta)$ is an
adapted pair of vector fields.
So, by the Frenet formulas,
we have
$$
\xi f = \vec{t} + v \kappa \vec{n},\quad
\xi \xi f = \kappa \vec{n} + v(\kappa \vec{n})'\quad
\text{and}\quad
\xi \xi \xi f
=
- \kappa^2 \vec{t} + \kappa' \vec{n}
+ \kappa \tau \vec{b} + v(\kappa \vec{n})''.
$$
Thus, it holds that
$$
\xi f= \vec{t},\quad
\xi \xi f = \kappa \vec{n},\quad
\xi \xi \xi f
=
- \kappa^2 \vec{t} + \kappa' \vec{n} + \kappa \tau \vec{b}\quad
(\text{on }S(f)).
$$
Furthermore,
we have
$$\eta f = - v \kappa \vec{n},\quad
\eta \eta f = v(\kappa \vec{n})' - \kappa \vec{n},\quad
\xi \eta \eta f
=
 v(\kappa \vec{n})'' + \kappa^2 \vec{t} - \kappa' \vec{n} -
\kappa \tau \vec{b}
$$
and
$$
\eta \eta \eta f
=
-v(\kappa \vec{n})''
+ 2(- \kappa^2 \vec{t} + \kappa' \vec{n} + \kappa \tau \vec{b}).
$$
Thus, it holds that
$$
\eta \eta f = - \kappa \vec{n},\quad
\xi \eta \eta f = \kappa^2 \vec{t}
- \kappa' \vec{n} - \kappa \tau \vec{b},\quad
\eta \eta \eta f = 2(-\kappa^2 \vec{t} + \kappa' \vec{n}+ \kappa \tau
\vec{b})\quad
(\text{on }S(f)).
$$
Finally, since $\eta \lambda = v\kappa' - \kappa$,
it holds that $\sgn(\eta \lambda) =-1$.
%it holds that:
%\begin{enumerate}
%\item [] $\xi f = \vec{t} + v \kappa \vec{n} \ \ \Rightarrow \xi f
%= \vec{t}$ on $S(f)$
%\item [] $\xi \xi f = \kappa \vec{n}
%+ v(\kappa \vec{n})' \ \ \Rightarrow \xi \xi f = \kappa
%\vec{n}$ on $S(f)$
%\item [] $\eta \lambda = v\kappa' -
%\kappa \ \  \Rightarrow \sgn(\eta \lambda) < 0 $ at $S(f)$
%\item [] $\eta f = - v \kappa \vec{n}$
%\item [] $\eta \eta f = v(\kappa \vec{n})' - \kappa \vec{n} \ \
%\Rightarrow \eta \eta f = - \kappa \vec{n}$ on $S(f)$
%\item [] $\eta \eta \eta f
%= -v(\kappa \vec{n})''
%+ 2(- \kappa^2 \vec{t} + \kappa' \vec{n} + \kappa \tau \vec{b})
%\Rightarrow \eta \eta \eta f
%= 2(-\kappa^2 \vec{t} + \kappa' \vec{n}+ \kappa \tau
%\vec{b})$ at $S(f)$
%\item [] $\xi \eta \eta f
%= v(\kappa \vec{n})'' + \kappa^2 \vec{t} - \kappa' \vec{n} -
%\kappa \tau \vec{b} \
%\Rightarrow \xi \eta \eta f = \kappa^2 \vec{t}
%- \kappa' \vec{n} - \kappa \tau \vec{b}$ on $S(f)$
%\item [] $\xi \xi \xi f
%=  - \kappa^2 \vec{t} + \kappa' \vec{n}
%+ \kappa \tau \vec{b} + v(\kappa \vec{n})''
%\ \ \Rightarrow \xi \xi \xi f
%=   - \kappa^2 \vec{t} + \kappa' \vec{n} + \kappa \tau \vec{b}
% $ on $S(f)$.
%\end{enumerate}
Therefore, we obtain
$$
\kappa_s=  -\kappa(u),\
\kappa_n= 0,\
\kappa_c= - \frac{2 \tau(u)}{\sqrt{\kappa(u)}},\
\kappa_t= \tau (u),\
\kappa_i = - \kappa(u) \tau(u)\
\text{at}\ (u,0).
$$
\vspace{-20mm}

\begin{center}
\begin{figure}[ht]
\centering
\includegraphics[width=.3\linewidth,bb=14 14 298 331]
{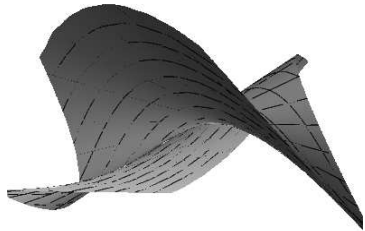}
\vspace{-10mm}

\caption{Tangent developable surface.}
\label{fig:2}
\end{figure}
\end{center}

%%%%%%%%%%%%%%%%%%%%%%%%%%%%%%%%%%%%%%%%%%%%%%%%%%%%%%%%%%%%%%%%%%%%%%%%%%%%
%%%%%%%%%%%%%%%%%%%%%%%%%%%%%%%%%%%%%%%%%%%%%%%%%%%%%%%%%%%%%%%%%%%%%%%%%%%%%%%
%%%%%%%%%%%%%%%%%%%%%%%%%%%%%%%%%%%%%%%%%%%%%%%%%%%%%%%%%%%%%%%%%%%%%%%%%%%%%%%

\appendix
\section{Global property of cusp directional torsion}
In this subsection, we consider a global property of the function
$\kappa_t$.

\begin{lemma}
\label{lem:global1} Let\/ $f:(\R^2,\zv)\to(\R^3,\zv)$ be a frontal
and\/ $\zv$ a non-degenerate singularity whose singular
direction and null direction are transversal. Let $M$  be the image
of $f$. Then a slice locus\/ $M\cap N_{\zv}M$ is a curve\/
$\hat\sigma$ with $\hat\sigma'(0)=\zv$ and\/
$\hat\sigma''(0)\ne\zv$.
\end{lemma}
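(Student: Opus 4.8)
The plan is to reduce everything to a computation in an adapted coordinate system. First I would fix an adapted coordinate system $(u,v)$ on $(\R^2,\zv)$, which is available in the present generality: the transversality of the singular and null directions is exactly the condition $d\lambda(\eta)\ne0$, and the construction of adapted coordinates in Section~\ref{sec:west:type} used only that $S(f)$ is a regular curve (guaranteed by non-degeneracy, since $S(f)=\lambda^{-1}(0)$ and $d\lambda\ne0$) together with this transversality, not the cuspidal edge normal form itself. In such coordinates $S(f)=\{v=0\}$ and $f_v(u,0)=\zv$, so $T_{\zv}M=\image df_{\zv}=\spann{f_u(\zv)}$, and since $f(\zv)=\zv$ the normal plane $N_{\zv}M$ is the linear $2$-plane orthogonal to $f_u(\zv)$.

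Next I would describe the slice locus as a graph. A point $f(u,v)$ lies in $N_{\zv}M$ precisely when $g(u,v):=\inner{f(u,v)}{f_u(\zv)}=0$. Since $g(\zv)=0$ and $g_u(\zv)=|f_u(\zv)|^2\ne0$, the implicit function theorem solves this as $u=\phi(v)$ with $\phi(0)=0$, so that $\hat\sigma(v)=f(\phi(v),v)$ parametrizes $M\cap N_{\zv}M$ near $p$. Differentiating $g(\phi(v),v)\equiv0$ and using $g_v(\zv)=\inner{f_v(\zv)}{f_u(\zv)}=0$ (because $f_v(\zv)=\zv$) gives $\phi'(0)=0$.

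With $\phi'(0)=0$ in hand the derivatives of $\hat\sigma$ are immediate: $\hat\sigma'(0)=\phi'(0)f_u(\zv)+f_v(\zv)=\zv$, which is the first assertion, while $\hat\sigma''(0)=\phi''(0)f_u(\zv)+f_{vv}(\zv)$. Since $\hat\sigma$ takes values in the linear subspace $N_{\zv}M$, the vector $\hat\sigma''(0)$ is orthogonal to $f_u(\zv)$; hence the tangential part $\phi''(0)f_u(\zv)+f_{vv}^T(\zv)$ must vanish, forcing $\hat\sigma''(0)$ to equal the normal component $f_{vv}^\bot(\zv)$ of $f_{vv}(\zv)$.

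The heart of the matter — and the one step I expect to require genuine input rather than bookkeeping — is to show $f_{vv}^\bot(\zv)\ne\zv$, i.e. that $f_{vv}(\zv)$ is not parallel to $f_u(\zv)$. Here I would invoke non-degeneracy directly. Since $\lambda\equiv0$ on $\{v=0\}$ we have $\lambda_u(u,0)=0$, so $d\lambda(\zv)\ne0$ is equivalent to $\lambda_v(\zv)\ne0$; and on $S(f)$, where $f_v=\zv$, differentiation of $\lambda=\det(f_u,f_v,\nu)$ leaves only $\lambda_v=\det(f_u,f_{vv},\nu)$. Thus $\{f_u(\zv),f_{vv}(\zv),\nu(\zv)\}$ is a basis of $\R^3$, so in particular $f_{vv}(\zv)\notin\spann{f_u(\zv)}$ and $f_{vv}^\bot(\zv)\ne\zv$. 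This gives $\hat\sigma''(0)\ne\zv$ and completes the argument.
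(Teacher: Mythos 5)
Your argument is correct and follows essentially the same route as the paper: adapted coordinates, the implicit function theorem applied to $\inner{f(u,v)}{f_u(\zv)}$ to get $u=\phi(v)$ with $\phi'(0)=0$, the computation $\hat\sigma''(0)=\phi''(0)f_u(\zv)+f_{vv}(\zv)$, and non-vanishing via $\det(f_u,f_{vv},\nu)(\zv)\ne0$. The only (harmless) additions are your explicit justification that adapted coordinates exist in this generality and the identification $\hat\sigma''(0)=f_{vv}^\bot(\zv)$, which the paper leaves implicit.
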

\begin{proof}
Let $(u,v)$ be an adapted coordinate system, since we can take it by
the assumption of $f$. Since $\inner{f(u,v)}{f_u(\zv)}_u\ne0$ at
$\zv$, there exists a function $u(v)$ ($u(0)=0$) such that
$\inner{f(u(v),v)}{f_u(\zv)}_u\equiv0$. We set $\sigma(v)=(u(v),v)$.
Then the image $\hat\sigma(v)=f\circ \sigma(v)$ coincides with the
slice locus $M\cap N_{\zv}M$. Remark that since $\partial_v$ is a
null direction, so $\inner{f(u,v)}{f_u(\zv)}_v(\zv)=0$,
and thus $u'(0)=0$ holds. By a calculation, we have
$\hat\sigma'(0)=\zv$, $\hat\sigma''(0)=f_u(\zv)u''(0)+f_{vv}(\zv)$.
Since $\zv$ is a non-degenerate singularity,
$\det(f_u,f_{vv},\nu)(\zv)\ne0$ holds. Therefore, we have
$\hat\sigma''(0)\ne\zv$.
\end{proof}
\begin{lemma}
\label{lem:global2} Let\/ $\sigma:(\R,\zv)\to(\R^2,\zv)$ be a curve
with\/ $\sigma'(0)=\zv$ and\/ $\sigma''(0)\ne\zv$. Then there exist
an orthonormal basis\/ $\{\x,\y\}$ of\/ $\R^2$ and a parameter\/ $v$
of\/ $(\R,0)$ such that\/ $j^3\sigma(v)=v^2\x/2+\alpha v^3\y/6$,
$\alpha\in\R$ holds.
\end{lemma}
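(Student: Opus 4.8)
The plan is to reduce the statement to a routine normal-form computation on the $3$-jet of $\sigma$, working entirely with truncated Taylor expansions since only $j^3\sigma$ is to be matched. Because $\sigma(0)=\zv$ and $\sigma'(0)=\zv$, I would first write $\sigma(t)=\tfrac{1}{2}\sigma''(0)\,t^2+\tfrac{1}{6}\sigma'''(0)\,t^3+O(t^4)$, where $\sigma''(0)\neq\zv$ by hypothesis, so that the leading behaviour of $\sigma$ is genuinely quadratic.

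Next I would fix the orthonormal frame by setting $\x=\sigma''(0)/|\sigma''(0)|$ and letting $\y$ be a unit vector orthogonal to $\x$. In this basis the components of $\sigma$ take the form $\sigma(t)=\bigl(p_2 t^2/2+p_3 t^3/6+O(t^4),\ q_3 t^3/6+O(t^4)\bigr)$, with $p_2=|\sigma''(0)|>0$ and with the $\y$-component carrying no quadratic term precisely because the quadratic coefficient $\sigma''(0)/2$ is aligned with $\x$. The positivity of $p_2$ is the point on which the orientation of $\x$ matters, and I would record it explicitly, since choosing $\x$ against $\sigma''(0)$ would make the subsequent $b_1$ imaginary.

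Then I would introduce a reparametrization $t=t(v)=b_1 v+b_2 v^2+\cdots$ with $b_1\neq0$ and determine $b_1,b_2$ so as to put the $\x$-component into the desired shape. Substituting and collecting terms up to order three, the requirement that the $v^2$-coefficient equal $1/2$ forces $b_1=1/\sqrt{p_2}$ (real because $p_2>0$), and the requirement that the $v^3$-coefficient vanish determines $b_2$; since $b_1\neq0$, the map $v\mapsto t(v)$ is a genuine local diffeomorphism, so $v$ is a legitimate parameter of $(\R,0)$. With these choices the $\x$-component of $j^3\sigma$ is exactly $v^2/2$, while the $\y$-component, whose lowest-order contribution comes from $q_3 t^3/6$, becomes $\alpha v^3/6$ with $\alpha=q_3 b_1^3=q_3/p_2^{3/2}$; no $v^2$ term can arise in the $\y$-component because none was present to begin with and $t(v)$ has no constant term. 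This yields $j^3\sigma(v)=v^2\x/2+\alpha v^3\y/6$, as claimed.

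I do not expect a serious obstacle here, as the whole argument is a finite manipulation of jets. The only two points that genuinely need care are ensuring $p_2>0$ so that $b_1$ is real, which is arranged by taking $\x$ in the direction of $\sigma''(0)$, and verifying that the reparametrization is invertible, which follows at once from $b_1\neq0$; everything else is bookkeeping of Taylor coefficients up to order three, and one may work with truncated polynomials throughout since only $j^3\sigma$ is required.
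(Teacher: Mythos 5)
Your proof is correct and follows essentially the same route as the paper's: both align the first basis vector with $\sigma''(0)$ (the paper does this via the rotation $\tilde A_\theta$ chosen so that $\sin\theta\,a_2+\cos\theta\,b_2=0$) and then reparametrize to normalize the first component to $v^2/2$, the only cosmetic difference being that the paper writes the new parameter in closed form as $\tilde v=\sqrt{2}\,v\bigl((\cos\theta\,a_2-\sin\theta\,b_2)+v(\cos\theta\,a_3-\sin\theta\,b_3)\bigr)^{1/2}$ while you determine the coefficients of $t(v)$ order by order. Your explicit remark that $\x$ must point in the direction of $\sigma''(0)$ so that the leading coefficient is positive is a point the paper leaves implicit, and is a welcome addition.
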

\begin{proof}
Since $\sigma'(0)=\zv$ and $\sigma''(0)\ne\zv$, we can set
$j^3\sigma(v)=(a_2v^2+a_3v^3,b_2v^2+b_3v^3)$, where
$a_2,a_3,b_2,b_3\in\R$ and $(a_2,b_2)\ne0$. We can assume that
$a_2\ne0$. Set $\theta$ satisfying  $\sin\theta a_2+\cos\theta
b_2=0$. Then we have $ \tilde A_\theta \big(j^3\sigma(v)\big) =
\big( (\cos\theta a_2-\sin\theta b_2)v^2+(\cos\theta a_3-\sin\theta
b_3)v^3, (\sin\theta a_3+\cos\theta b_3)v^3 \big) $ (See
\eqref{eq:rot} for $\tilde A_\theta$.). Set $\tilde v = \sqrt{2}\,
v\big( (\cos\theta a_2-\sin\theta b_2)+v(\cos\theta a_3-\sin\theta
b_3) \big)^{1/2}$. Then $ \tilde A_\theta\big(j^3\sigma(\tilde
v)\big) = \big(\tilde v^2/2, \alpha \tilde v^3 \big)$,
$\alpha\in\R$, holds. Setting ${}^t\x=\tilde
A_\theta^{-1}\big({}^t(1,0)\big)$ and ${}^t\y=\tilde
A_\theta^{-1}\big({}^t(0,1)\big)$, we have the result.
\end{proof}
Let $\Sigma$ be a two dimensional manifold and
$f:\Sigma\to\R^3$ a frontal. Let $\gamma:S^1\to \Sigma$ be a simple
closed curve consists only of non-degenerate singularities whose
singular direction and null direction are transversal, namely
$df(\gamma')\ne\zv$.
In \cite{msuy}, this type of singularities are called
non-degenerate singular points of the {\em second kind}.
Denote $\hat{\gamma}=f\circ\gamma$.
Let $u$ be an arclength parameter of $\hat\gamma$
and
$\dd_1,\dd_2$ an orthonormal frame along $\hat\gamma$,
namely an orthonormal frame field of the normal
plane $(\hat\gamma')^\perp$ of $\hat\gamma$.
Then we have
$$
\pmt{\e'\\ \dd_1'\\ \dd_2}=
\pmt{0&c_1&c_2\\
    -c_1&0&c_3\\
    -c_2&-c_3&0}
\pmt{\e\\ \dd_1\\ \dd_2},
$$
where, $\e=\hat\gamma'$.
If the curvature of $\hat\gamma$ does not vanish
and $\e,\dd_1,\dd_2$ is the Frenet frame,
then $c_1=\kappa, c_2=0, c_3=\tau$.

For a sufficiently small $\ep$,
a map $(t_1,t_2,t_3)\mapsto\hat\gamma(t_1)+t_2\dd_1+t_3\dd_2$,
$(-\ep<t_2,t_3<\ep)$ is diffeomorphic.
Thus by Lemmas \ref{lem:global1} and \ref{lem:global2},
$f$ can be represented as
$\hat\gamma(u)+v^2\x(u)/2+\alpha v^3\y(u)/6+v^4\z(u,v)$,
where $\x(u),\y(u),\z(u,v)$ are
vector fields along $\hat\gamma(u)$
and $\{\x(u),\y(u)\}$ is an orthonormal basis
of $\hat\gamma(u)^\perp$.
Remark that $\eta\eta f$ is proportional to $\x$.
Then
there exists a function $\theta(u)$ such that
$$
\begin{array}[tb]{rcl}
    \vect{x}(u)&=&\cos\theta(u)\dd_1(u)-\sin\theta(u)\dd_2(u),\\
    \vect{y}(u)&=&\sin\theta(u)\dd_1(u)+\cos\theta(u)\dd_2(u)
\end{array}
$$
hold. By a direct calculation, $ \kappa_t(u) = c_3(u) - \theta'(u) $
holds. Hence
$\frac{1}{2\pi}\int_{\gamma}(c_3(u)-\kappa_t(u))\,du\in\Z$ holds.
This integer $n$ is called the\/ {\it intersection number of the
frame\/ $(\hat\gamma,\x)$ with respect to the frame\/
$(\hat\gamma,\dd_1)$}. Thus we have
$\int_{\gamma}\kappa_t(u)\,du=\int_{\gamma}c_3(u)\,du-2\pi n$. On
the other hand, if the curvature of $\hat\gamma$ never vanish, then
one can take $\dd_1$ as the principal normal vector. Then, $c_3$ is
the torsion of $\hat\gamma$. Hence $\int_{\gamma}\kappa_t(u)\,du$ is
equal to the difference of the total torsion of $\hat\gamma$ between
intersection number of $(\hat\gamma,\x)$  with respect to
$(\hat\gamma,\dd_1)$.
\smallskip

{\bf Acknowledgments}\hspace{3mm}
The authors thank Takashi Nishimura, Wayne Rossman,
Masaaki Umehara and Kotaro Yamada
for valuable comments.
They also thank the referee for 
careful reading and 
comments that improved the results here.
This work was partly supported by
CAPES and JSPS under Brazil-Japan research cooperative program,
Proc BEX [12998/12-5] and
Grant-in-Aid for
Scientific Research (Young Scientists (B)) [23740045], from the
Japan Society for the Promotion of Science.

%%%%%%%%%%%%%%%%%%%%%%%%%%%%%%%%%%%%%%%%%%%%%%%%%%%%%%%%%%%%%%
%\section{thebibliography}

\medskip
{\footnotesize
\begin{tabular}{ll}
\begin{tabular}{l}
Departamento de Matem{\'a}tica,\\
IBILCE - UNESP -
Univ Estadual Paulista \\
R. Crist{\'o}v{\~a}o Colombo, 2265, CEP 15054-000,\\
S{\~a}o Jos{\'e} do Rio Preto, SP, Brazil\\
  E-mail: {\tt lmartinsO\!\!\!aibilce.unesp.br}
\end{tabular}
&
\begin{tabular}{l}
Department of Mathematics,\\
Graduate School of Science, \\
Kobe University, \\
Rokko, Nada, Kobe 657-8501, Japan\\
  E-mail: {\tt sajiO\!\!\!amath.kobe-u.ac.jp}
\end{tabular}
\end{tabular}
}

\begin{thebibliography}{9}
%\bibitem{abe1}
%K. Abe,
%``On closed regular curves in Riemannian manifolds'',
%Nihonkai Math. J. {\bf 2} (1991), 47--61.
 \bibitem{AGV} V. I. Arnold, S. M. Gusein-Zade, and A. N. Varchenko,
         ``Singularities of differentiable maps", Vol. $1$,
         Monographs in Mathematics {\bf 82}, Birkh\"auser, Boston, 1985.
\bibitem{bw}
J. W. Bruce and J. M. West,
``Functions on a crosscap",
{\it Math. Proc. Cambridge Philos. Soc.}, {\bf 123} (1998), 19--39.

\bibitem{dt}
F. S. Dias and F. Tari, ``On the geometry of the cross-cap in
the Minkowski 3-space and binary differential equations'', to appear
in Tohoku Math. J.
 \bibitem{fh}
         T. Fukui and M. Hasegawa,
          ``Fronts of Whitney umbrella
         - a differential
         geometric approach via blowing up",
         \emph{J.\ Singul.} {\bf 4} (2012), 35--67.
 %\bibitem{fh2}
%         T. Fukui and M. Hasegawa,
%          ``Height functions on Whitney umbrellas",
%         preprint, 2012, arXiv:1205.3044v4.
 \bibitem{ggs}
         R. Garcia, C. Gutierrez, and J. Sotomayor,
          ``Lines of principal curvature
         around umbilics and Whitney umbrellas",
         {\it Tohoku Math.\ J.}, {\bf 52} (2000), 163--172.
\bibitem{hhnuy}
         M. Hasegawa, A. Honda,
         K. Naokawa, M. Umehara, and K. Yamada,
         ``Intrinsic invariants of cross caps",
         Selecta Math. online first, DOI:10.1007/s00029-013-0134-6.
\bibitem{izudual}
  S. Izumiya,
  \newblock ``Legendrian dualities  and spacelike hypersurfaces
  in the lightcone",
\newblock \emph{Moscow Math. J.} {\bf 9}:2 (2009), 325--357.
 \bibitem{krsuy}
         M.~Kokubu, W.~Rossman, K.~Saji, M.~Umehara, and K.~Yamada,
         ``Singularities of flat fronts in hyperbolic $3$-space",
         \emph{Pacific J. Math.} {\bf 221} (2005), 303--351.
 \bibitem{mn} L. F. Martins and J. J. Nu{\~n}o-Ballesteros,
           ``Contact properties of surfaces  in $\R^3$ with corank 1
              singularities",  to appear in Tohoku Math. J.

\bibitem{msuy}
L. F. Martins, K. Saji, M. Umehara and K. Yamada,
``Behavior of Gaussian curvature around non-degenerate
singular points on wave fronts", preprint, 2013.
\bibitem{mu}
         S. Murata and M. Umehara,
         ``Flat surfaces with singularities in Euclidean $3$-space",
         \emph{J. Differential Geom.} {\bf 82} (2009), 279--316.
\bibitem{nishimura}
         T. Nishimura,
         ``Whitney umbrellas and swallowtails",
         \emph{Pacific J. Math.}
         {\bf 252} (2011), 459--471.
 \bibitem{joey}
         J. M. Oliver,
         ``On pairs of foliations of
         a parabolic cross-cap",
         \emph{Qual. Theory Dyn. Syst.}
         {\bf 10} (2011), 139--166.
 \bibitem{Oset-Tari} R. Oset Sinha and  F. Tari,  ``Projections of
 surfaces in $\R^4$ to $\R^3$ and the geometry of their singular
 images". To appear in Revista Matem\'{a}tica Iberoamericana.
\bibitem{front}
         K. Saji, M. Umehara, and K. Yamada,
          ``The geometry of fronts",
         \emph{Ann. of Math.} {\bf 169} (2009), 491--529.
\bibitem{suy3}
         K. Saji, M. Umehara, and K. Yamada,
         ``$A_k$ singularities of wave fronts",
         \emph{Math. Proc. Cambridge Philos. Soc.} {\bf 146} (2009), 731-746.
\bibitem{suyojm}
        K. Saji, M. Umehara, and K. Yamada,
        ``The duality between singular points and inflection points on
        wave fronts",
        \emph{Osaka J. Math.} {\bf 47} (2010), 591--607.
\bibitem{suykodai}
        K. Saji, M. Umehara, and K. Yamada,
``$A_2$-singularities of hypersurfaces with non-negative sectional
curvature in Euclidean space",
\emph{Kodai Math. J.} {\bf 34} (2011),
390-409.
\bibitem{faridtari}
F. Tari,
``On pairs of geometric foliations on a cross-cap",
\emph{Tohoku Math. J.}
{\bf 59} (2007), no. 2, 233--258.
 \bibitem{west}
         J.~M.~West,
         ``The differential geometry of the cross-cap".
         Ph.D. thesis, Liverpool University 1995.

\bibitem{whitney}
  H. Whitney,
  \newblock ``The singularities of a smooth
  $n$-manifold in $(2n-1)$-space",
  \newblock \emph{Ann. of Math.} {\bf 45} (1944), 247--293.
\end{thebibliography}
\end{document}